\newtheorem{theorem}{Theorem}
\newtheorem{corollary}[theorem]{Corollary}
\newtheorem{definition}[theorem]{Definition}
\newtheorem{example}[theorem]{Example}
\newtheorem{lemma}[theorem]{Lemma}
\newtheorem{proposition}[theorem]{Proposition}
\newenvironment{proof}[1][Proof]{\textbf{#1.} }{\ \rule{0.5em}{0.5em}}
\begin{document}

\title{On the linear algebra of local complementation}
\author{Lorenzo Traldi\\Lafayette College\\Easton, Pennsylvania 18042}
\date{}
\maketitle

\begin{abstract}
We explore the connections between the linear algebra of symmetric matrices
over $GF(2)$ and the circuit theory of 4-regular graphs. In particular, we
show that the equivalence relation on simple graphs generated by local
complementation can also be generated by an operation defined using inverse matrices.

\bigskip

Keywords. Euler circuit, interlacement, inverse matrix, local complement, pivot

\bigskip

Mathematics Subject\ Classification. 05C50

\end{abstract}

\section{Introduction}

This paper is about the connection between the circuit theory of 4-regular
multigraphs and the elementary linear algebra of symmetric matrices over the
two-element field $GF(2)$.

\begin{definition}
A square matrix $S=(s_{ij})$ with entries in $GF(2)$\ is \emph{symmetric} if
$s_{ij}=s_{ji}$ for all $i\neq j$. $S$ is \emph{zero-diagonal} if $s_{ii}=0$
for all $i$.
\end{definition}

As $GF(2)$ is the only field that concerns us, we will often omit the phrase
\textquotedblleft with entries in $GF(2)$.\textquotedblright

Symmetric matrices are important to us because they arise as adjacency
matrices of graphs, and for our purposes it is not important if the vertices
of a graph are listed in any particular order. That is, if $V$ is a finite set
then we regard a $V\times V$ matrix as a function $V^{2}\rightarrow GF(2)$. Of
course we must impose an order on $V$ in order to display a matrix.

Many zero-diagonal symmetric matrices are singular, and consequently do not
have inverses in the usual sense. Nevertheless matrix inversion gives rise to
an interesting relation among zero-diagonal symmetric matrices\textit{.}

\begin{definition}
\label{minv} Let $S$ be a zero-diagonal symmetric matrix. A \emph{modified
inverse} of $S$ is a zero-diagonal symmetric matrix obtained as follows: first
toggle some diagonal entries of $S$ to obtain an invertible symmetric matrix
$S^{\prime}$, and then toggle every nonzero diagonal entry of $(S^{\prime
})^{-1}$.
\end{definition}

Here \emph{toggling} refers to the function $x\mapsto x+1$, which interchanges
the elements of $GF(2)$.

The relation defined by modified inversion is obviously symmetric, but
examples indicate that it is not reflexive or transitive.

\begin{example}
\label{exone}\emph{For each of these four matrices, the set of modified
inverses consists of the other three.}%
\[%
\begin{pmatrix}
0 & 1 & 1\\
1 & 0 & 1\\
1 & 1 & 0
\end{pmatrix}
\text{ \ }%
\begin{pmatrix}
0 & 1 & 1\\
1 & 0 & 0\\
1 & 0 & 0
\end{pmatrix}
\text{ }%
\begin{pmatrix}
0 & 1 & 0\\
1 & 0 & 1\\
0 & 1 & 0
\end{pmatrix}
\text{ }%
\begin{pmatrix}
0 & 0 & 1\\
0 & 0 & 1\\
1 & 1 & 0
\end{pmatrix}
\]

\end{example}

\begin{example}
\emph{The modified inverses of the first of the following three matrices
include the other two, but not itself. The modified inverses of the second
include the first, but not itself or the third. The modified inverses of the
third include the first and itself, but not the second.}%
\[
\text{\ }%
\begin{pmatrix}
0 & 0 & 1 & 0\\
0 & 0 & 0 & 1\\
1 & 0 & 0 & 1\\
0 & 1 & 1 & 0
\end{pmatrix}
\text{ \ }%
\begin{pmatrix}
0 & 1 & 1 & 0\\
1 & 0 & 1 & 1\\
1 & 1 & 0 & 0\\
0 & 1 & 0 & 0
\end{pmatrix}
\text{ \ }%
\begin{pmatrix}
0 & 1 & 1 & 1\\
1 & 0 & 1 & 0\\
1 & 1 & 0 & 1\\
1 & 0 & 1 & 0
\end{pmatrix}
\]

\end{example}

Definition \ref{minv} yields an equivalence relation in the usual way.

\begin{definition}
Let $S$ and $T$ be zero-diagonal symmetric $GF(2)$-matrices. Then $S\sim
_{mi}T$ if $T$ can be obtained from $S$ through a finite (possibly empty)
sequence of modified inversions.
\end{definition}

We recall some relevant definitions from graph theory. In a 4-regular
multigraph every vertex is of degree 4. Loops and parallel edges are allowed;
a loop contributes twice to the degree of the incident vertex. In order to
distinguish between the two orientations of a loop it is technically necessary
to consider \emph{half-edges} rather than edges; we will often leave it to the
reader to refine statements regarding edges accordingly. A \emph{walk} in a
4-regular graph is a sequence $v_{1},h_{1},h_{2}^{\prime},v_{2},h_{2}%
,h_{3}^{\prime},...,h_{k-1},h_{k}^{\prime},v_{k}$ such that for each $i$,
$h_{i}$ and $h_{i}^{\prime}$ are distinct half-edges incident on $v_{i}$, and
$h_{i}$ and $h_{i+1}^{\prime}$ are half-edges of a single edge. The walk is
\emph{closed} if $v_{1}=v_{k}$. A walk in which no edge is repeated is a
\emph{trail}, and a closed trail is a \emph{circuit}. An \emph{Euler circuit}
is a circuit that contains every edge of the graph. Every connected 4-regular
multigraph has Euler circuits, and every 4-regular multigraph has\ \emph{Euler
systems}, each of which contains one Euler circuit for every connected
component of the graph.%

\begin{figure}
[ptb]
\begin{center}
\includegraphics[
trim=0.859233in 7.886825in 1.147843in 1.009787in,
height=1.2886in,
width=4.3932in
]%
{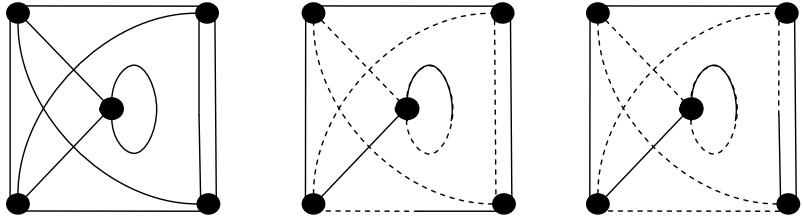}%
\caption{A 4-regular multigraph with two of its Euler circuits. To trace an
Euler circuit, maintain the dash pattern while traversing each vertex. (The
dash pattern may change within an edge, though.)}%
\label{intext5}%
\end{center}
\end{figure}

For example, Figure \ref{intext5} illustrates two Euler circuits in a
connected 4-regular multigraph. These two\ Euler circuits illustrate the following.

\begin{definition}
\label{kappa} If $C$ is an Euler system of a 4-regular multigraph $F$ and
$v\in V(F)$ then the $\kappa$\emph{-transform} $C\ast v$ is the Euler system
obtained by reversing one of the two $v$-to-$v$ trails within the circuit of
$C$ incident at $v$.
\end{definition}

The $\kappa$-transformations were introduced by Kotzig \cite{K}, who proved
the fundamental fact of the circuit theory of 4-regular multigraphs.

\begin{theorem}
(\textit{Kotzig's theorem}) All the Euler systems of a 4-regular multigraph
can be obtained from any one by applying finite sequences of $\kappa$-transformations.
\end{theorem}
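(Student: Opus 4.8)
The plan is to reduce to the connected case and then induct on the number of vertices, working throughout with the \emph{transition system} of an Euler circuit. Since the $\kappa$-transform $C\ast v$ alters only the circuit of the component containing $v$, and an Euler system is an independent choice of one Euler circuit per component, it suffices to prove that any two Euler circuits of a \emph{connected} $4$-regular multigraph $F$ are related by a finite sequence of $\kappa$-transforms. Each such circuit is recorded by its \emph{transition} at every vertex, namely the pairing of the four half-edges at $v$ into the two pairs traversed consecutively; there are three transitions at each vertex, and two Euler circuits with the same transition everywhere coincide. So the goal becomes: the transition systems yielding a single circuit form one orbit under $\kappa$-transforms.

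First I would pin down the local effect of a $\kappa$-transform. Writing the circuit based at $v$ as two $v$-to-$v$ trails $U_{1},U_{2}$, reversing $U_{1}$ leaves the transition at every vertex other than $v$ unchanged, since the internal vertices of $U_{1}$ are merely visited in the opposite order, which does not change how their half-edges are paired. A short bookkeeping with the four half-edges at $v$ shows that reversing either $U_{1}$ or $U_{2}$ yields the \emph{same} new transition, call it $P_{b}$, and that the remaining transition $P_{c}$ is precisely the one that closes $U_{1}$ and $U_{2}$ into two separate circuits. Complementing this, once the transitions at all vertices but $v$ are fixed, $F-v$ decomposes into two open trails that pair the four half-edges of $v$; among the three transitions at $v$, the one matching this pairing produces two circuits and the other two produce a single circuit. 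Thus, starting from a single-circuit transition system, $\kappa$ at $v$ toggles between the two non-separating transitions, $P_{a}\leftrightarrow P_{b}$, while $P_{c}$ is the unique separating one.

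For the induction I would \emph{smooth} a vertex. Fix $v\in V(F)$ and two Euler circuits $C,D$. If I can arrange, via $\kappa$-transforms, that $C$ has the same transition at $v$ as $D$, then smoothing $v$ along this common transition (joining the two edges of each traversed pair and deleting $v$) yields a connected $4$-regular multigraph $F'$ on $n-1$ vertices; Euler circuits of $F$ with the prescribed transition at $v$ correspond bijectively to Euler circuits of $F'$, with $\kappa$-transforms at the surviving vertices matching on both sides. The inductive hypothesis then links the images of $C$ and $D$ in $F'$, and lifting that sequence (each $\kappa$-transform acting away from $v$, hence preserving the transition at $v$) links $C$ and $D$ in $F$. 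The one-vertex base case, and the minor degenerate cases where a traversed pair forms a loop at $v$, are checked directly.

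The crux is the alignment step, and the obstacle is the separating case. If $D$'s transition at $v$ equals $C(v)$ or the toggle $P_{b}$, I finish with at most one $\kappa$-transform. The difficulty is that $D(v)$ may be the transition $P_{c}$ that separates $C$ at $v$ into $U_{1},U_{2}$, which no single $\kappa$ at $v$ can reach. Here I would first apply a $\kappa$-transform at a vertex $w$ \emph{interlaced} with $v$ in $C$, meaning $w$ occurs once in $U_{1}$ and once in $U_{2}$; such a $w$ must exist, since otherwise $U_{1}$ and $U_{2}$ would meet only at $v$, making $v$ a cut vertex for which $P_{c}$ separates every transition system and so could not yield the single connected circuit $D$. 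The key point to verify---the main technical hurdle---is that a $\kappa$-transform at an interlaced $w$ changes the open-trail pairing induced on the four half-edges of $v$, so that $D(v)$ becomes non-separating relative to the new circuit and is reached by one further $\kappa$ at $v$; tracing the two $v$-passages through the reversed $w$-to-$w$ trail shows the pairing indeed flips to a different one of the three. After this alignment I smooth $v$ as above, and the disagreement newly created at $w$ is harmless, since $w$ persists as a vertex of $F'$ and is absorbed by the inductive hypothesis.
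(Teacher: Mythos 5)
The paper contains no proof of this statement: Kotzig's theorem is quoted as a known result, attributed to Kotzig \cite{K} (with Theorem VII.5 of Fleischner \cite{F1} mentioned as a generalization), so your argument can only be judged on its own merits, not against an in-paper proof. Judged so, your proposal is correct and follows the classical inductive line. The local analysis checks out: writing the four half-edges at $v$ as $a,b$ (ends of $U_{1}$) and $c,d$ (ends of $U_{2}$), the circuit $C$ uses the pairing $\{b,c\},\{a,d\}$ at $v$; reversing either trail produces the same pairing $\{a,c\},\{b,d\}$; and the third pairing $\{a,b\},\{c,d\}$ closes $U_{1}$ and $U_{2}$ into separate circuits --- so $\kappa$ at $v$ indeed toggles the two non-separating transitions while fixing the transition at every other vertex (each passage through another vertex pairs the same two half-edges whether traversed forward or backward). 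Your crux claim is also right: if $w$ occurs once in each $U_{i}$, say $U_{1}=A_{1}wA_{2}$ and $U_{2}=B_{1}wB_{2}$, then after $\kappa$ at $w$ the two open trails obtained by cutting at $v$ run $v\,A_{2}^{-1}\,w\,B_{2}\,v$ and $v\,A_{1}\,w\,B_{1}^{-1}\,v$, joining $b$ to $d$ and $a$ to $c$; hence the formerly separating pairing $\{a,b\},\{c,d\}$ becomes non-separating and is reached by one further $\kappa$ at $v$. And when no interlaced $w$ exists, $v$ is a cut vertex with $a,b$ on one side and $c,d$ on the other, so no Euler circuit whatsoever can use $\{a,b\},\{c,d\}$ at $v$, which disposes of the problematic case. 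Your smoothing step is exactly the \emph{detachment} operation the paper itself invokes (citing \cite{F1, N1}) in Sections 4 and 5 to reduce Theorems \ref{nullspaces} and \ref{intinv} to Jaeger's special cases, and your observation that an Euler circuit never pairs the two half-edges of a loop at $v$ (such a pairing is always separating) guarantees detachment never produces a vertexless free loop, so the degenerate cases you defer really are routine. What remains implicit --- that Euler circuits of $F$ with a fixed transition at $v$ correspond bijectively to Euler circuits of the detached graph $F^{\prime}$, with $\kappa$-transforms at surviving vertices matching on both sides --- is true and easy, but in a polished write-up it deserves to be isolated as a lemma, since it is the load-bearing wall of the induction.
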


In this paper our attention is focused on 4-regular multigraphs, but we should
certainly mention that the reader interested in general Eulerian multigraphs
will find Fleischner's books \cite{F1, F2} uniquely valuable. In particular,
Theorem VII.5 of \cite{F1} generalizes Kotzig's theorem to arbitrary Eulerian graphs.

The \emph{alternance} or \emph{interlacement} graph $\mathcal{I}(C)$
associated to an Euler system $C$ of a 4-regular multigraph $F$ was introduced
shortly after Kotzig's work became known \cite{Bold, CL, RR}. Two vertices
$v\neq w$ of $F$ are \textit{interlaced} with respect to $C$ if and only if
they appear in the order $v...w...v...w$ on one of the circuits included in
$C$.

\begin{definition}
The \emph{interlacement graph} of a 4-regular graph $F$ with respect to
an\ Euler system $C$ is the simple graph $\mathcal{I}(F,C)$ with
$V(\mathcal{I}(F,C))=V(F)$ and $E(\mathcal{I}(F,C))=\{vw$
$\vert$
$v$ and $w$ are interlaced with respect to $C\}$. The \emph{interlacement
matrix} of $F$ with respect to $C$ is the adjacency matrix of this graph; when
no confusion can arise, we use $\mathcal{I}(C)$ to denote both the graph and
the matrix.
\end{definition}

A simple graph that can be realized as an interlacement graph is called a
\emph{circle graph}, and Kotzig's $\kappa$-transformations give rise to the
fundamental operation of the theory of circle graphs, which we call
\emph{simple local complementation}. This operation has been studied by
Bouchet \cite{Bu, Bec, Bco}, de Fraysseix \cite{F}, and Read and Rosenstiehl
\cite{RR}, among others. The reader can easily verify that the effect of a
$\kappa$-transformation on an interlacement matrix is described as follows.

\begin{definition}
\label{lc} Let $S$ be a symmetric $n\times n$ matrix, and suppose $1\leq i\leq
n$. Then the \emph{simple local complement of }$S$ \emph{at} $i$ is the
symmetric $GF(2)$-matrix $S^{i}$ obtained from $S$ as follows: whenever $i\neq
j\neq k\neq i$ and $s_{ij}\neq0\neq s_{ik}$, toggle $s_{jk}$.
\end{definition}

We call this operation \emph{simple} local complementation to distinguish it
from the similar operation that Arratia, Bollob\'{a}s and Sorkin called
\textit{local complementation} in \cite{A1, A2, A}. The two operations differ
on the diagonal:

\begin{definition}
\label{nslc}Let $S$ be a symmetric $n\times n$ matrix, and suppose $1\leq
i\leq n$. Then the \emph{(non-simple) local complement of }$S$ \emph{at} $i$
is the symmetric matrix $S_{ns}^{i}$ obtained from $S^{i}$ as follows:
whenever $j\neq i$ and $s_{ij}\neq0$, toggle $s_{jj}$.
\end{definition}

The first $3\times3$ matrix of Example \ref{exone} has three distinct simple
local complements, which are the same as its three modified inverses. Each of
the three other $3\times3$ matrices of Example \ref{exone} has only two
distinct simple local complements, itself and the first matrix.

\begin{definition}
Let $S$ and $T$ be zero-diagonal symmetric $GF(2)$-matrices. Then $S\sim
_{lc}T$ if $T$ can be obtained from $S$ through a finite (possibly empty)
sequence of simple local complementations.
\end{definition}

\begin{proposition}
This defines an equivalence relation.
\end{proposition}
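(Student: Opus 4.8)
The plan is to verify the three defining properties of an equivalence relation, the only nontrivial one being symmetry. Reflexivity is immediate, since the empty sequence of simple local complementations carries $S$ to itself. Transitivity is equally immediate: if $S\sim_{lc}T$ via one finite sequence of operations and $T\sim_{lc}U$ via another, then concatenating the two sequences exhibits $U$ as obtained from $S$ by a single finite sequence, so $S\sim_{lc}U$.

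The heart of the matter is symmetry, and for this I would establish that a single simple local complementation is reversible by another of its own kind---indeed, that it is an involution, $(S^{i})^{i}=S$ for every $i$. The key observation is that, by Definition \ref{lc}, forming $S^{i}$ toggles only entries $s_{jk}$ with $i\neq j\neq k\neq i$; in particular it alters neither the diagonal of $S$ nor row and column $i$. Consequently the set $\{\,j:s_{ij}\neq 0\,\}$ of indices adjacent to $i$ is the same in $S$ and in $S^{i}$. Applying the operation a second time therefore toggles exactly the same collection of off-diagonal entries $s_{jk}$ that were toggled the first time, and since toggling is an involution on each entry of $GF(2)$, every changed entry is restored to its original value. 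Hence $(S^{i})^{i}=S$.

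With the involution property in hand, symmetry follows at once. Suppose $S\sim_{lc}T$, say $T$ is obtained from $S$ by successively taking simple local complements at $i_{1},i_{2},\dots,i_{r}$. Then performing the operations at $i_{r},i_{r-1},\dots,i_{1}$ in reverse order undoes them one at a time and recovers $S$ from $T$, so $T\sim_{lc}S$.

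I expect no serious obstacle: the whole argument rests on the single elementary observation that simple local complementation leaves row $i$, column $i$, and the diagonal untouched, so that repeating it retoggles precisely the same entries. The only point inviting a moment's care is confirming that the set of entries toggled on the second pass is genuinely identical to that toggled on the first, which is exactly what the invariance of the $i$-th row and column guarantees.
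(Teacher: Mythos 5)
Your proof is correct and follows the same route as the paper: the paper likewise disposes of reflexivity and transitivity as immediate and reduces symmetry to the involution $(S^{i})^{i}=S$, which it simply asserts while you justify it by noting that simple local complementation fixes row $i$, column $i$, and the diagonal, so the second application toggles exactly the same entries. Your added verification of the involution is a sound elaboration of the step the paper leaves to the reader.
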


\begin{proof}
As $(S^{i})^{i}=S$, $\sim_{lc}$ is symmetric. The reflexive and transitive
properties are obvious.
\end{proof}

In Section 3 we prove a surprising result:

\begin{theorem}
\label{lcinv} Let $S$ and $T$ be zero-diagonal symmetric $GF(2)$-matrices.
Then $S\sim_{lc}T$ if and only if $S\sim_{mi}T$.
\end{theorem}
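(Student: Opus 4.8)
The plan is to route both relations through the \emph{principal pivot transform} (PPT) over $GF(2)$ and to reduce the theorem to a careful accounting of diagonal entries. Listing a vertex $i$ first, a symmetric matrix with $m_{ii}=1$ and its inverse have the block forms
\[
M=\begin{pmatrix} 1 & b^{T}\\ b & M_{0}\end{pmatrix},\qquad M^{-1}=\begin{pmatrix} 1+b^{T}\Sigma^{-1}b & b^{T}\Sigma^{-1}\\ \Sigma^{-1}b & \Sigma^{-1}\end{pmatrix},\qquad \Sigma=M_{0}+bb^{T}.
\]
The one-vertex PPT at $i$ fixes row and column $i$ and sends $M_{0}\mapsto M_{0}+bb^{T}$; since $bb^{T}$ toggles $m_{jk}$ exactly when $j$ and $k$ are both neighbors of $i$, this is precisely the \emph{non-simple} local complement $S_{ns}^{i}$ of Definition \ref{nslc} (off-diagonal given by the simple local complement, neighbor diagonals toggled). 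The full inverse is the PPT on all of $V$, and PPTs compose by symmetric difference of their index sets. Note that a one-vertex PPT is \emph{illegal} on a zero-diagonal matrix because the $1\times1$ pivot block vanishes; this is exactly the reason Definitions \ref{minv} and \ref{nslc} must toggle the diagonal, and it frames the whole problem as ``pivot, then repair the diagonal.''

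For the inclusion $\sim_{mi}\subseteq\sim_{lc}$, I would start from a modified inverse $T=\overline{(S+D)^{-1}}$ (writing $\overline{\,\cdot\,}$ for zeroing the diagonal) and factor the full PPT computing $(S+D)^{-1}$ into one-vertex pivots on nonzero diagonal entries together with, whenever the remaining principal block has all-zero diagonal, $2\times2$ pivots on a $\left(\begin{smallmatrix}0&1\\1&0\end{smallmatrix}\right)$ block. Invertibility of $S+D$ guarantees this symmetric elimination never stalls. Each one-vertex pivot is a non-simple local complement, and each $2\times2$ pivot is an edge local complement, equal to $((S^{u})^{v})^{u}$ and hence a product of three simple local complementations; so, \emph{modulo the diagonal}, every pivot in the factorization is realized by simple local complementation. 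For the reverse inclusion it suffices to prove $S\sim_{mi}S^{i}$, and here I would use the block formula above with $m_{ii}$ toggled to $1$ to locate the off-diagonal part of the cleared inverse and exhibit $S$ and $S^{i}$ at a common position in the $\sim_{mi}$ graph.

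The main obstacle is this repeated phrase ``modulo the diagonal'': simple local complementation preserves the zero diagonal, whereas the pivot factorization passes through matrices carrying $1$'s on the diagonal, and those intermediate diagonal entries feed into the off-diagonal updates of \emph{later} pivots. Worse, inversion replaces the pivoted vertex's neighborhood $b$ by $b^{T}\Sigma^{-1}$, so a vertex's neighborhood is genuinely reshuffled rather than preserved. The heart of the argument must therefore be a reconciliation lemma asserting that, because both endpoints $S$ and $\overline{(S+D)^{-1}}$ are zero-diagonal, the diagonal toggles can be carried as separate bookkeeping that commutes with the off-diagonal updates, so that the off-diagonal trajectory is reproducible by simple local complementations alone — equivalently, that the diagonal data is redundant modulo $\sim_{lc}$. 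Establishing this commutation is where essentially all the content lies; once it is available, I expect both inclusions to follow by induction on $n$ through the Schur-complement recursion $\Sigma=M_{0}+bb^{T}$, in which the smaller matrix $\Sigma$ is itself a (non-simple) local complement of the original.
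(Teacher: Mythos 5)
Your forward direction ($S\sim_{mi}T\Rightarrow S\sim_{lc}T$) is essentially the paper's argument: factor the inverse-computing principal pivot transform into one-vertex pivots at nonzero diagonal entries and $2\times2$ pivots on $\left(\begin{smallmatrix}0&1\\1&0\end{smallmatrix}\right)$ blocks, identify the former with non-simple local complements and the latter with edge pivots $((S^{u})^{v})^{u}$. But the ``reconciliation lemma'' you defer --- and describe as the place ``where essentially all the content lies'' --- is not actually a difficulty, and your worry that intermediate diagonal entries ``feed into the off-diagonal updates of later pivots'' is unfounded. By Definition \ref{lc} a simple local complement neither reads nor writes any diagonal entry, and the off-diagonal effect of each legal step in the factorization uses only off-diagonal data: a one-vertex pivot at $i$ updates $s_{jk}\mapsto s_{jk}+s_{ji}s_{ik}$, and the $2\times2$ pivot's correction term $X$ involves only entries of the two pivot rows outside the pivot block. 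Diagonal entries determine only \emph{which} pivots are legal (and invertibility guarantees the factorization never stalls), not what the pivots do off the diagonal; so projecting the $\sim_{piv}$ chain off the diagonal immediately yields a $\sim_{lc}$ chain. The paper dispatches this in one sentence; it is an observation, not a commutation lemma.

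The genuine gap is the reverse inclusion $S\sim_{lc}T\Rightarrow S\sim_{mi}T$, which is where the paper's real work lives and where your proposal offers only one unworked sentence. Reducing to $S\sim_{mi}S^{i}$ is correct, but ``use the block formula with $m_{ii}$ toggled to $1$'' does not get off the ground, for two reasons. First, toggling $m_{ii}$ alone does not make the matrix invertible --- you need $\Sigma=M_{0}+bb^{T}$ invertible, hence further diagonal toggles --- and the existence of a diagonal perturbation making $S$ nonsingular while \emph{controlling the $i$th diagonal entry} is a nontrivial existence theorem. The paper proves it (Lemma \ref{two}: if row $i$ of $S$ is nonzero, some toggle of diagonal entries \emph{other than the $i$th} is nonsingular) by induction on bordering, using a sharpened form of the $\nu,\nu,\nu+1$ nullity lemma of Balister, Bollob\'{a}s, Cutler and Pebody (Lemma \ref{twothree}); nothing in your sketch supplies this. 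Second, even granted an invertible $M=S+D$, you need a mechanism tying the cleared inverse to $S^{i}$; the paper's is Lemma \ref{one}: keeping $m_{ii}=0$ and toggling the entire neighborhood block $M_{11}$ --- which produces a matrix agreeing with $S^{i}$ off the diagonal --- changes the inverse in exactly its $i$th diagonal entry, so $T=\overline{M^{-1}}=\overline{(M^{\prime})^{-1}}$ is a \emph{common} modified inverse of $S$ and $S^{i}$, giving $S\sim_{mi}T\sim_{mi}S^{i}$ in two steps. Your variant with $m_{ii}=1$ tends instead toward circularity: the natural computation (pivoting at $i$ first and composing principal pivot transforms) relates the cleared inverses of the perturbations of $S$ and $S^{i}$ by a simple local complementation at $i$, reducing $S\sim_{mi}S^{i}$ to the same statement for another pair. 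In short, the proposal misplaces the difficulty: the commutation you flag as the heart is trivial, while the two lemmas that actually carry the theorem are absent.
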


We might say that modified inversion constitutes a kind of \emph{global
complementation} of a zero-diagonal symmetric matrix (or equivalently, a
simple graph). Theorem \ref{lcinv} shows that even though individual global
complementations do not generally have the same effect as individual simple
local complementations, the two operations generate the same equivalence relation.

Theorem \ref{lcinv} developed as we read the work of several authors who have
written about the equivalence relation on looped graphs (or equivalently,
symmetric $GF(2)$-matrices) generated by (non-simple) local complementations
at looped vertices and pivots on unlooped edges; we denote this relation
$\sim_{piv}$. (If $G$ is a looped graph then a \emph{pivot} on an unlooped
edge $ab$ of $G$ is the triple simple local complement $((G^{a})^{b}%
)^{a}=((G^{b})^{a})^{b}$.) Although $\sim_{piv}$ is defined for symmetric
matrices and $\sim_{lc}$ is defined only for zero-diagonal symmetric matrices,
it is reasonable to regard $\sim_{lc}$ as a coarser version of $\sim_{piv}$,
obtained by ignoring the difference between looped and unlooped vertices.
Genest \cite{G1, G} called the equivalence classes under $\sim_{piv}$\emph{
Sabidussi orbits}. Glantz and Pelillo \cite{GP} and Brijder and Hoogeboom
\cite{BH, BH1, BH2} observed that another way to generate $\sim_{piv}$ is to
use a matrix operation related to inversion, the \emph{principal pivot
transform} \cite{Ts, Tu}. In particular, Theorem 24 of \cite{BH1} shows that
the combination of loop-toggling with $\sim_{piv}$ yields a description of
$\sim_{lc}$ that is different from Definition \ref{lc} (and also Definition
\ref{minv}). Ilyutko \cite{I1, I} has also studied inverse matrices and the
equivalence relation $\sim_{piv}$; he used them to compare the adjacency
matrices of certain kinds of chord diagrams that arise from knot diagrams.
Ilyutko's account includes an analysis of the effect of the Reidemeister moves
of knot theory, and also includes the idea of generating an equivalence
relation on nonsingular symmetric matrices by toggling diagonal entries.
Considering the themes shared by these results, it seemed natural to wonder
whether the connection between matrix inversion and $\sim_{piv}$ reflects a
connection between matrix inversion and the coarser equivalence relation
$\sim_{lc}$.

Theorem \ref{lcinv} is part of a very pretty theory tying the elementary
linear algebra of symmetric $GF(2)$-matrices to the circuit theory of
4-regular multigraphs. This theory has been explored by several authors over
the last forty years, but the relevant literature is fragmented and it does
not seem that the generality and simplicity of the theory are fully
appreciated. We proceed to give an account.

At each vertex of a 4-regular multigraph there are three \emph{transitions} --
three distinct ways to sort the four incident half-edges into two disjoint
pairs. Kotzig \cite{K} introduced this notion, and observed that each of the
$3^{n}$ ways to choose one transition at each vertex yields a partition of
$E(F)$ into edge-disjoint circuits; such partitions are called \emph{circuit
partitions} \cite{A1, A2, A} or \emph{Eulerian partitions} \cite{L, Ma}.
(Kotzig actually used the term \emph{transition} in a slightly different way,
to refer to only one pair of half-edges. As we have no reason to ever consider
a single pair of half-edges without also considering the complementary pair,
we follow the usage of Ellis-Monaghan and Sarmiento \cite{E} and Jaeger
\cite{J} rather than Kotzig's.)

In \cite{Ti} we introduced the following way to label the three transitions at
$v$ with respect to a given Euler system $C$. Choose either of the two
orientations of the circuit of $C$ incident at $v$, and use $\phi$ to label
the transition followed by this circuit; use $\chi$ to label the other
transition in which in-directed edges are paired with out-directed edges; and
use $\psi$ to label the transition in which the two in-directed edges are
paired with each other, and the two out-directed edges are paired with each
other. See Figure \ref{intext3}, where the pairings of half-edges are
indicated by using solid line segments for one pair and dashed line segments
for the other.%

\begin{figure}
[ptb]
\begin{center}
\includegraphics[
trim=1.143720in 8.170293in 1.866895in 1.293255in,
height=0.8925in,
width=3.6919in
]%
{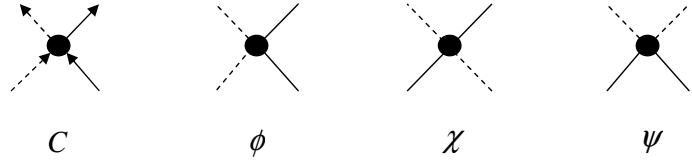}%
\caption{The three transitions at a vertex are labeled according to their
relationships with the incident circuit of $C$.}%
\label{intext3}%
\end{center}
\end{figure}

\begin{definition}
\label{relint} Let $C$ be an Euler system of a 4-regular multigraph $F$, and
let $P$ be a circuit partition of $F$. The \emph{relative interlacement matrix
}$\mathcal{I}_{P}(C)$ of $P$ with respect to $C$ is obtained from
$\mathcal{I}(C)$ by modifying the row and column corresponding to each vertex
at which $P$ does not involve the transition labeled $\chi$ with respect to
$C$:

(i) If $P$ involves the $\phi$ transition at $v$, then modify the row and
column of $\mathcal{I}(C)$ corresponding to $v$ by changing every nonzero
entry to $0$, and changing the diagonal entry from 0 to 1.

(ii) If $P$ involves the $\psi$ transition at $v$, then modify the row and
column of $\mathcal{I}(C)$ corresponding to $v$ by changing the diagonal entry
from 0 to 1.
\end{definition}

The relative interlacement matrix determines the number of circuits included
in $P$:%
\[
\nu(\mathcal{I}_{P}(C))+c(F)=\left\vert P\right\vert ,
\]
where $\nu(\mathcal{I}_{P}(C))$ denotes the $GF(2)$-nullity of $\mathcal{I}%
_{P}(C)$ and $c(F)$ denotes the number of connected components in $F$. We
refer to this equation as the \textit{circuit-nullity formula} or the
\textit{extended Cohn-Lempel equality}; many special cases and reformulations
have appeared over the years \cite{Be, BM, Bu, Br, CL, J1, Jo, KR, Lau, MP,
Me, M, R, So, S, Z}. A detailed account is given in \cite{Tbn}.

The circuit-nullity formula is usually stated in an equivalent form, with part
(i) of Definition \ref{relint} replaced by:

(i)$^{\prime}$ If $P$ involves the $\phi$ transition at $v$, then remove the
row and column of $\mathcal{I}(C)$ corresponding to $v$.

We use (i) instead because it is more convenient for the sharper form of the
circuit-nullity formula given in Theorem \ref{nullspaces}, which includes a
precise description of the nullspace of $\mathcal{I}_{P}(C)$.

\begin{definition}
\label{relcore}Let $P$ be a circuit partition of the 4-regular multigraph $F$,
and let $C$ be an Euler system of $F$. For each circuit $\gamma\in P$, let the
\emph{relative core vector} of $\gamma$ with respect to $C$ be the vector
$\rho(\gamma,C)\in GF(2)^{V(F)}$ whose nonzero entries correspond to the
vertices of $F$ at which $P$ involves either the $\chi$ or the $\psi$
transition, and $\gamma$ is singly incident. (A circuit $\gamma$ is
\emph{singly incident} at $v$ if $\gamma$ includes precisely two of the four
half-edges at $v$.)
\end{definition}

\begin{theorem}
\label{nullspaces}Let $P$ be a circuit partition of the 4-regular multigraph
$F$, and let $C$ be an Euler system of $F$.

(i) The nullspace of the relative interlacement matrix $\mathcal{I}_{P}%
(C)$\ is spanned by the relative core vectors of the circuits of $P$.

(ii) For each connected component of $F$, the relative core vectors of the
incident circuits of $P$ sum to $0$.

(iii) If $Q\subset P$ and there is no connected component of $F$ for which $Q$
contains every incident circuit of $P$, then the relative core vectors of the
circuits of $Q$ are linearly independent.
\end{theorem}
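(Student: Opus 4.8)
The plan is to establish (ii) and (iii) by combinatorial arguments and then to deduce (i) from them together with the circuit-nullity formula. The basic local observation I will use repeatedly is that the transition of $P$ at a vertex $w$ sorts the four incident half-edges into two pairs, each traversed by a circuit of $P$; consequently either one circuit is doubly incident at $w$, using both pairs, or exactly two circuits are singly incident there, one per pair. For (ii), note first that every relative core vector vanishes at each $\phi$ vertex by definition. At a $\chi$ or $\psi$ vertex the local observation shows that the number of circuits singly incident is $0$ or $2$, so when we add the relative core vectors of all circuits incident to a single component of $F$, every coordinate receives $0$ or $1+1=0$. As each summand is supported on the vertices its circuit meets, the total is supported on that component and is identically $0$, which is precisely (ii).

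For (iii), suppose some nonempty $Q'\subseteq Q$ has $\sum_{\gamma\in Q'}\rho(\gamma,C)=0$; I will show that $Q'$ must consist of all the circuits incident to one or more components, contradicting the hypothesis on $Q$. Reading the relation at a $\chi$ or $\psi$ vertex, the local observation forces the singly incident pair there, when present, to lie wholly inside or wholly outside $Q'$; equivalently, all four half-edges at such a vertex belong to circuits with a common membership status in $Q'$. Now colour each edge of $F$ by whether its circuit belongs to $Q'$ and examine two edges consecutive along $C$, meeting at a vertex $w$. If $w$ is a $\phi$ vertex then $P$ pairs these two edges exactly as $C$ does, so they lie in one circuit and share a colour; if $w$ is a $\chi$ or $\psi$ vertex then the condition just derived makes all four edges at $w$ monochromatic, so again the two consecutive edges agree. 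Hence the colour is constant along $C$, so it is constant on each component; therefore $Q'$ is a union of components' full circuit families, as claimed, and (iii) follows.

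It remains to prove (i). Combining (ii) and (iii) pins down the rank of the relative core vectors: the $c(F)$ component sums of (ii) are independent relations because they involve disjoint families of circuits, while (iii) applied to any family obtained by deleting one circuit from each component exhibits $\lvert P\rvert-c(F)$ independent core vectors; hence their rank is exactly $\lvert P\rvert-c(F)$. By the circuit-nullity formula this equals $\nu(\mathcal{I}_P(C))$, the dimension of the nullspace, so it suffices to check that every relative core vector actually lies in that nullspace; this last point is where I expect the real work. Expanding the row of $\mathcal{I}_P(C)$ indexed by a vertex $u$, the $\phi$ rows give $0$ immediately, and for a non-$\phi$ vertex the requirement becomes the interlacement-parity identity that the number of $v\neq u$ in the support of $\rho(\gamma,C)$ interlaced with $u$ on $C$ is even when $u$ carries the $\chi$ transition, and, when $u$ carries the $\psi$ transition, is odd precisely when $\gamma$ is singly incident at $u$. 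I would prove this identity by induction on the number of non-$\phi$ vertices: reassigning the $\phi$ transition to one such vertex changes $\mathcal{I}_P(C)$ only in a single row and column --- a symmetric update of rank at most two --- and, by the circuit-nullity formula, alters $\lvert P\rvert$ by exactly one as a single circuit splits or two circuits merge. The main obstacle is the bookkeeping across this update, namely tracking how the relative core vector of the split or merged circuit is created or destroyed and confirming that membership in the nullspace is preserved; once this is done, the span of the relative core vectors is a subspace of the nullspace of the same dimension $\lvert P\rvert-c(F)$, so the two coincide and (i) holds.
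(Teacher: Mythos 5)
Your parts (ii) and (iii) are sound. The observation that at each $\chi$/$\psi$ vertex either one circuit of $P$ is doubly incident or exactly two are singly incident gives (ii) immediately, and your edge-colouring argument along the circuits of $C$ for (iii) is a correct alternative to the paper's route (the paper instead shows, via Proposition \ref{onept}, that any nonempty $Q'$ has a vertex where $P$ involves $\chi$ or $\psi$ and exactly one circuit of $Q'$ is incident, singly, so the sum of core vectors over $Q'$ has a coordinate equal to $1$). Your reduction of (i) is also exactly right: given (ii), (iii), and the circuit-nullity formula, everything hinges on the single claim that each relative core vector lies in $\ker\mathcal{I}_P(C)$ — this is Proposition \ref{ker} in the paper.

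That membership claim, however, is where your proposal has a genuine gap, and the induction you sketch would fail as stated. You assert that reassigning the $\phi$ transition at one non-$\phi$ vertex ``alters $\lvert P\rvert$ by exactly one as a single circuit splits or two circuits merge.'' This is false: a single transition change can leave $\lvert P\rvert$ unchanged, the circuit through the vertex merely having a segment reversed. The $\kappa$-transform is the basic counterexample: relative to $C$, the Euler system $C\ast v$ involves the $\psi$ transition at $v$ and $\phi$ everywhere else, and changing that $\psi$ back to $\phi$ produces $C$ itself, so both partitions have $\lvert P\rvert=c(F)$. (This is consistent with the $\nu,\nu,\nu+1$ lemma: among the three matrices differing in the row and column of $v$, two share a nullspace, and the $\phi$-version can be one of the two.) So your induction has a third, unhandled case; and even in the genuine split/merge cases, the deferred ``bookkeeping'' — how the singly/doubly incidence pattern of the affected circuit changes at every other vertex, and how membership in the old nullspace follows from membership in the new one when the matrices differ in an entire row and column — is precisely the substantive content, which you acknowledge but do not supply. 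The paper avoids induction altogether: writing the circuit of $C$ at $v$ as $vC_1vC_2v$, it notes that as one follows $\gamma$, every switch between $C_1$ and $C_2$ occurs at a vertex $w\in N(v)$ where $\gamma$ is singly incident and involves the $\chi$ or $\psi$ transition, and counting switches mod $2$ for $\gamma$ singly incident at $v$ (under each of $\phi,\chi,\psi$) and doubly incident at $v$ yields exactly the parity identity you state, directly. Until you either repair the induction with all three cases and the omitted bookkeeping, or substitute such a direct parity argument, part (i) remains unproven.
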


Theorem \ref{nullspaces} is an example of a comment made above, that the
generality and simplicity of the relationship between linear algebra and the
circuit theory of 4-regular multigraphs have not been fully appreciated.

On the one hand, Theorem \ref{nullspaces} is general: it applies to every
4-regular multigraph $F$, every Euler system $C$ and every circuit partition
$P$. In Proposition 4 of \cite{J1}, Jaeger proved an equivalent version of the
special case of Theorem \ref{nullspaces} involving the additional assumptions
that $F$ is connected and $C$ and $P$ involve different transitions at every
vertex. (These assumptions are implicit in Jaeger's use of left-right walks on
chord diagrams.) It is Jaeger who introduced the term \emph{core vector}; we
use \emph{relative core vector} to reflect the fact that in Definition
\ref{relcore}, the vector is adjusted according to the Euler system with
respect to which it is defined. We provide a direct proof of Theorem
\ref{nullspaces} in Section 4 below, but the reader who is already familiar
with Jaeger's special case may deduce the general result using
\emph{detachment} \cite{F1, N1} along $\phi$ transitions. Bouchet \cite{Bu}
gave a different proof with the additional restriction that $C$ and $P$
respect a fixed choice of edge-direction in $F$; or equivalently, that $P$
involve only transitions that are labeled $\chi$ with respect to $C$.
(Bouchet's result is also presented in \cite{GR}.) Genest \cite{G1, G} did not
use techniques of linear algebra, but he did discuss the use of bicolored
graphs to represent a circuit partition $P$ using an Euler system that does
not share a transition with $P$ at any vertex. These previous results may give
the erroneous impression that an Euler system provides information about only
those circuit partitions that disagree with it at every vertex. In fact, every
Euler system gives rise to a mapping%
\[
\{\text{circuit partitions of }F\}\rightarrow\{\text{subspaces of
}GF(2)^{V(F)}\}
\]
under which the image of an arbitrary circuit partition $P$ is the
$(\left\vert P\right\vert -c(F))$-dimensional subspace spanned by the relative
core vectors of the circuits in $P$.

On the other hand, Theorem \ref{nullspaces} is simple: Definition \ref{relint}
gives an explicit description of the relative interlacement matrix associated
to a circuit partition, and Definition \ref{relcore} gives an explicit
description of its nullspace. Although some version of Theorem
\ref{nullspaces} may be implicit in the theory of delta-matroids, isotropic
systems and multimatroids associated with 4-regular graphs (cf. for instance
\cite{B1, B2, B3, B4}), these structures are sufficiently abstract that it is
difficult to extract explicit descriptions like Definition \ref{relint} and
Definition \ref{relcore} from them.

The circuit-nullity formula implies that if $C$ is an Euler system of $F$,
then we can find every other Euler system $C^{\prime}$ of $F$ by finding every
way to obtain a nonsingular matrix from $\mathcal{I}(C)$ using the
modifications given in Definition \ref{relint}. Moreover, there is a striking
symmetry tying together the relative interlacement matrices of two Euler systems:

\begin{theorem}
\label{intinv}Let $F$ be a 4-regular graph with Euler systems $C$ and
$C^{\prime}$. Then%
\[
\mathcal{I}_{C^{\prime}}(C)^{-1}=\mathcal{I}_{C}(C^{\prime}).
\]

\end{theorem}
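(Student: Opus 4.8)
The plan is to prove the equivalent statement $\mathcal{I}_{C'}(C)\cdot\mathcal{I}_C(C')=I$, after first recording that both factors are invertible. Since an Euler system has exactly one circuit in each connected component, $|C'|=c(F)$, and the circuit-nullity formula gives $\nu(\mathcal{I}_{C'}(C))=|C'|-c(F)=0$; the same holds with the roles of $C$ and $C'$ reversed. Write $A=\mathcal{I}_{C'}(C)$ and $B=\mathcal{I}_C(C')$. It then suffices to show that for every vertex $w$ the column $Be_w$ satisfies $A\cdot(Be_w)=e_w$, i.e. that column $w$ of $B$ is column $w$ of $A^{-1}$.

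To produce column $w$ of $A^{-1}$ I would use Theorem \ref{nullspaces}. Let $P_w$ be the circuit partition obtained from $C'$ by changing the transition at $w$ from $\phi_{C'}$ to $\chi_{C'}$ and leaving every other vertex alone. Switching a single in-out transition of a connected Euler circuit splits it into two circuits $\gamma_1,\gamma_2$, so $|P_w|=c(F)+1$ and $\nu(\mathcal{I}_{P_w}(C))=1$. By Theorem \ref{nullspaces} the nullspace is spanned by the relative core vectors $\rho(\gamma_1,C)=\rho(\gamma_2,C)=:u_w$ (they coincide because, by part (ii), the core vectors in $w$'s component sum to $0$ while every other component contributes a single, hence zero, core vector). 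Because $P_w$ and $C'$ use the same transition at every vertex except $w$, the matrices $\mathcal{I}_{P_w}(C)$ and $A$ agree off row and column $w$; writing $\mathcal{I}_{P_w}(C)=A+E_w$ with $E_w$ supported on row and column $w$, the relation $\mathcal{I}_{P_w}(C)\,u_w=0$ becomes $A\,u_w=E_w u_w$. The first computational step is to evaluate $E_w u_w$ using Definition \ref{relcore} and Definition \ref{relint} and to check that it equals $e_w$; then $u_w$ is exactly column $w$ of $A^{-1}$.

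The second, and decisive, step is to identify $u_w=\rho(\gamma_1,C)$ with column $w$ of $B=\mathcal{I}_C(C')$. Here the roles of $C$ and $C'$ are interchanged: $u_w$ is a core vector measured against $C$ for a partition close to $C'$, whereas $B$ is built from $\mathcal{I}(C')$ and the transitions of $C$. The translation rests on two correspondences that I would establish by tracing the circuits near $w$: first, that the $C$-label of the transition of $C'$ at a vertex and the $C'$-label of the transition of $C$ at that vertex agree (so that the partition of $V(F)$ into $\chi$-, $\phi$- and $\psi$-type vertices is the same whether computed from $(C,C')$ or from $(C',C)$); and second, that the single-incidence condition defining the nonzero entries of $\rho(\gamma_1,C)$ corresponds precisely to the interlacement-with-respect-to-$C'$ pattern recorded, after the modifications of Definition \ref{relint}, in column $w$ of $B$.

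The hard part will be exactly this bookkeeping of the $\chi$/$\psi$ labels, because the distinction between $\chi$ and $\psi$ is orientation-dependent and a change of Euler system reverses the induced orientation along a trail; one must verify that these reversals are compatible on both sides of the identity. By Kotzig's theorem one may reduce to the case in which $C'$ arises from $C$ by a single $\kappa$-transformation, where $A=B=I$ trivially; but a single transformation produces only a $\psi$-type vertex, so this reduction does not by itself yield the general identity, and an inductive argument would require precisely the reference-independence encoded in the orientation bookkeeping above. Once that compatibility is in hand, combining the two steps gives $A\,u_w=e_w$ with $u_w=Be_w$ for every $w$, hence $AB=I$ and $\mathcal{I}_{C'}(C)^{-1}=\mathcal{I}_C(C')$.
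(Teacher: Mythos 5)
Your strategy---realize a candidate for column $w$ of $A^{-1}$ as the relative core vector $u_w$ of the partition $P_w$ obtained by short-circuiting an Euler circuit at $w$, invoke Theorem \ref{nullspaces}, and exploit the fact that $\mathcal{I}_{P_w}(C)$ differs from $A=\mathcal{I}_{C'}(C)$ only in row and column $w$---is essentially the paper's own argument with the roles of $C$ and $C'$ interchanged (the paper short-circuits $C$ at $v$ and works with $\rho(vC_1v,C')$ against columns of $B=\mathcal{I}_C(C')$). But there is a genuine error: your uniform claims fail at every vertex where $C$ and $C'$ involve the \emph{same} transition, and this is not repairable bookkeeping within your scheme. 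At such a $w$, part (i) of Definition \ref{relint} erases row and column $w$ of both $A$ and $B$, so $Ae_w=Be_w=e_w$; yet $u_w$ still records the vertices interlaced with $w$ with respect to $C'$ at which $C$ and $C'$ disagree, so in general $u_w\neq e_w$, and then both of your claims $E_wu_w=e_w$ and $u_w=Be_w$ are false (since $A$ is invertible, $Au_w=e_w$ would force $u_w=e_w$). Concretely, let $F$ have two vertices $x,w$ joined by four parallel edges, let $C$ be an Euler circuit, and let $C'=C\ast x$. Then $C$ and $C'$ agree at $w$, and $A=B=I$; but $\mathcal{I}_{P_w}(C)=\left(\begin{smallmatrix}1&1\\1&1\end{smallmatrix}\right)$, so $u_w=(1,1)^{T}\neq e_w$, and $Au_w\neq e_w$. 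This is exactly the phenomenon the paper flags in the introduction---information destroyed by part (i) cannot be recovered---and it is why the paper splits into cases, handling shared-transition vertices trivially (both relevant rows and columns are identity rows and columns, so the product row is an identity row) and making the core-vector identification only at vertices where $C$ and $C'$ differ. Your proof needs the same case split; restricted to differing vertices, your two steps become precisely the paper's two claims.

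Two further points. First, the symmetry you propose to establish---that the $C$-label of the transition of $C'$ at a vertex agrees with the $C'$-label of the transition of $C$ there---is false: the mixed cases $\chi\psi'$ and $\psi\chi'$ of Figure \ref{intext9} (the paper's $v\in X-W$ and $v\in W-X$) are exactly the configurations where one cross-label is $\psi$-type and the other is $\chi$-type, so an argument resting on that symmetry cannot go through. Fortunately the identification $u_w=Be_w$ at a differing vertex does not need it: as in the paper, one compares entries directly, using that $\gamma_1$ is singly incident at $x\neq w$ if and only if $w$ and $x$ are interlaced with respect to $C'$, that $P_w$ involves the transition of $C'$ at every $x\neq w$, and a separate matching of the diagonal entry at $w$. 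Second, both of your pivotal verifications are announced rather than performed (``the first computational step is \ldots to check,'' ``I would establish by tracing the circuits''); the evaluation of $E_wu_w$ requires the four-case analysis of Figure \ref{intext9} that constitutes the substantive half of the paper's proof, so as it stands the proposal identifies the right framework but leaves the theorem's actual content, and one false shortcut, in place of the decisive computations.
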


Like Theorem \ref{nullspaces}, Theorem \ref{intinv} generalizes results of
Bouchet and Jaeger discussed in \cite{Bu, GR, J1}, which include the
additional assumption that $C$ and $C^{\prime}$\ are compatible (i.e., they do
not involve the same transition at any vertex). Bouchet's version requires
also that $C$ and $C^{\prime}$ respect the same edge-directions.

Greater generality is always desirable, of course, but it is important to
observe that in fact, Theorem \ref{intinv} is particularly valuable when $C$
and $C^{\prime}$ are compatible. The equation $\mathcal{I}_{C^{\prime}%
}(C)^{-1}=\mathcal{I}_{C}(C^{\prime})$ does not allow us to construct the full
interlacement matrix $\mathcal{I}(C^{\prime})$ directly from $\mathcal{I}(C)$
if $C$ and $C^{\prime}$ share a transition at any vertex, because there is no
way to recover the information that is lost when off-diagonal entries are set
to $0$ in part (i) of Definition \ref{relint}. (For instance, if $C$ is any
Euler system and $v$ is any vertex then Definition \ref{relint} tells us that
$\mathcal{I}_{C}(C)=\mathcal{I}_{C}(C\ast v)=\mathcal{I}_{C\ast v}(C)$ is the
identity matrix no matter how $C$ is structured.) This observation motivates
our last definition.

\begin{definition}
Let $F$ be a 4-regular multigraph with an Euler system $C$, and suppose
$W\subseteq V(F)$ has the property that an Euler system is obtained from $C$
by using the $\psi$ transition at every vertex in $W$, and the $\chi$
transition at every vertex not in $W$. Then this Euler system is the $\iota
$\emph{-transform} of $C$ with respect to $W$; we denote it $C\#W$.
\end{definition}

According to the circuit-nullity formula, $C\#W$ is an Euler system if and
only if a nonsingular matrix $M$ is obtained from $\mathcal{I}(C)$ by changing
the diagonal entries corresponding to elements of $W$ from $0$ to $1$. Then
$M=\mathcal{I}_{C\#W}(C)$ and $M^{-1}=\mathcal{I}_{C}(C\#W)$, so
$\mathcal{I}(C)$ and $\mathcal{I}(C\#W)$ are modified inverses. We call the
process of obtaining $C\#W$ from $C$\ an $\iota$-transformation in recognition
of this connection with matrix inversion.

Theorem \ref{lcinv} and the fact that simple local complementations correspond
to $\kappa$-transformations imply the following alternative to Kotzig's theorem:

\begin{theorem}
\label{tau} All the Euler systems of a 4-regular multigraph can be obtained
from any one by applying finite sequences of $\iota$-transformations.
\end{theorem}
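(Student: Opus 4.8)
The plan is to transport Kotzig's theorem across the dictionary between Euler systems and zero-diagonal symmetric matrices. Fix a 4-regular multigraph $F$ and two Euler systems $C$ and $C'$. By Kotzig's theorem there is a finite sequence of $\kappa$-transformations carrying $C$ to $C'$, and since a $\kappa$-transformation acts on interlacement matrices as a simple local complementation, we obtain $\mathcal{I}(C)\sim_{lc}\mathcal{I}(C')$. Theorem \ref{lcinv} then upgrades this to $\mathcal{I}(C)\sim_{mi}\mathcal{I}(C')$, so there is a chain $\mathcal{I}(C)=S_0,S_1,\dots,S_m=\mathcal{I}(C')$ in which each $S_{t+1}$ is a modified inverse of $S_t$. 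The goal is to realise this chain by a sequence of $\iota$-transformations of Euler systems of $F$.

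First I would isolate the lifting step as a lemma: if $D$ is an Euler system of $F$ and $T$ is the modified inverse of $\mathcal{I}(D)$ obtained by toggling the diagonal on a set $W$, then $D\#W$ is an Euler system and $\mathcal{I}(D\#W)=T$. The forward half is exactly the circuit-nullity criterion already recorded in the text: toggling the diagonal of $\mathcal{I}(D)$ on $W$ produces a nonsingular $M$ precisely when $D\#W$ is an Euler system, and then $M=\mathcal{I}_{D\#W}(D)$ and $M^{-1}=\mathcal{I}_D(D\#W)$. To identify $T$ with $\mathcal{I}(D\#W)$ I would observe that, because $D$ and $D\#W$ differ at every vertex, $D$ never follows the $\phi$ transition relative to $D\#W$; hence by Definition \ref{relint} only case (ii) can apply, so $\mathcal{I}_D(D\#W)=M^{-1}$ agrees with the ordinary interlacement matrix $\mathcal{I}(D\#W)$ off the diagonal and carries extra $1$'s only on the diagonal (at the $\psi$-vertices). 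Toggling the nonzero diagonal entries of $M^{-1}$ therefore returns exactly $\mathcal{I}(D\#W)$, which is the definition of $T$. Granting this lemma, the chain $S_0,\dots,S_m$ lifts inductively: starting from $C_0=C$, each modified inversion $S_t\to S_{t+1}$ is realised by the $\iota$-transformation $C_{t+1}=C_t\#W_t$, producing Euler systems $C=C_0,\dots,C_m$ with $\mathcal{I}(C_t)=S_t$.

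The step I expect to be the real obstacle is the last one: the construction only guarantees that the terminal system $C_m$ satisfies $\mathcal{I}(C_m)=\mathcal{I}(C')$, not that $C_m=C'$. Distinct Euler systems of the same $F$ can share an interlacement matrix — already two loops at a single vertex give two distinct Euler circuits, each with interlacement matrix $(0)$ — so a carelessly chosen lift can terminate at the wrong preimage. The missing ingredient is therefore a lemma asserting that Euler systems of $F$ with equal interlacement matrices lie in a common $\iota$-orbit; it would also suffice to show that the $\iota$-orbit of $C$ is closed under $\kappa$-transformations. I would attack this by encoding each Euler system faithfully through its map $V(F)\to\{\phi,\chi,\psi\}$ of transition labels relative to the fixed system $C$ — a faithful encoding, since a choice of transition at every vertex determines the circuit partition outright — and then tracking how these labels are rewritten under successive $\iota$-transformations, so as to select the sets $W_t$ that steer the lift onto $C'$ itself rather than merely onto its interlacement class. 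This bookkeeping is delicate because the $\chi/\psi$ distinction at a vertex is defined through the in/out structure of the current base system and so is rewritten, in a globally constrained way, each time the base system changes; making this rewriting explicit is where the work lies.

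Finally, I would note that the reverse containment is free: any $\iota$-transform $C\#W$ is by definition an Euler system of $F$, so Kotzig's theorem already places it in the $\kappa$-orbit of $C$. Hence the $\iota$-orbit is contained in the $\kappa$-orbit, and once the obstacle above is cleared the two orbits coincide; since Kotzig's theorem makes the $\kappa$-orbit all of the Euler systems of $F$, the same follows for the $\iota$-orbit, which is the assertion of Theorem \ref{tau}.
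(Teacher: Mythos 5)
You have correctly reproduced the paper's intended route (the paper derives Theorem \ref{tau} in a single sentence from Theorem \ref{lcinv} and the $\kappa$/local-complementation correspondence, and your lifting lemma is exactly the paper's observation that $\mathcal{I}(C\#W)$ is the modified inverse of $\mathcal{I}(C)$ determined by the toggle set $W$), and the obstacle you flag at the end is a genuine gap --- but it is worse than you suspect: the repair lemma you propose (Euler systems with equal interlacement matrices lie in a common $\iota$-orbit) is false, and no steering of the sets $W_t$ can close the gap. Let $F$ have vertices $v,w$, a loop at each, and two parallel edges joining $v$ to $w$, and let $C$ be the Euler circuit with double occurrence word $vvww$. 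At either vertex the $\chi$ transition (relative to $C$) pairs the two half-edges of the incident loop, detaching that loop as a separate circuit; so every Euler system of $F$ uses a label from $\{\phi,\psi\}$ (relative to $C$) at each vertex, and by the circuit-nullity check all four such choices are Euler systems. All four have interlacement matrix $0$, so from each of them the only legal toggle set is $W=\{v,w\}$, i.e., each Euler system admits exactly one $\iota$-transformation. Since $C_1\#W$ never uses the $\phi$ transition of $C_1$, it differs from $C_1$ at every vertex; and since both old and new labels lie in the two-element set $\{\phi,\psi\}$ at each vertex, the unique move swaps $\phi\leftrightarrow\psi$ at both vertices simultaneously. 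Hence the parity of the number of $\psi$-labels is invariant, and the four Euler systems split into two $\iota$-orbits, $\{(\phi,\phi),(\psi,\psi)\}$ and $\{(\psi,\phi),(\phi,\psi)\}$. In particular $(\psi,\phi)$ --- which is the $\kappa$-transform $C\ast v$ --- shares its interlacement matrix with $C$ but is $\iota$-inaccessible from it. Your single-vertex example with two loops is rescued only by vacuity (there are no other vertices at which $C\ast v$ must use $\phi$); one more vertex destroys the statement, so the difficulty you isolated is fatal to the theorem as literally stated, and the paper's one-sentence proof passes silently through exactly the non-injective correspondence $C\mapsto\mathcal{I}(C)$ that you identified.

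For completeness: away from this degenerate situation your plan does go through, and the bookkeeping you anticipated is exactly the paper's discussion of Figure \ref{intext9}. If the row of $\mathcal{I}(D)$ at $v$ is nonzero, take $W\not\ni v$ as in Lemma \ref{two}, let $M$ be the resulting nonsingular matrix and $X$ the set of nonzero diagonal entries of $M^{-1}$; Lemma \ref{one} makes the second move legal, and the label translations in the Figure \ref{intext9} paragraph show that $(D\#W)\#\left(X\Delta\{v\}\right)$ has transition $\phi$ (relative to $D$) at every $u\neq v$ and $\psi$ at $v$, i.e., it equals $D\ast v$ exactly, not merely something with the same interlacement matrix. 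So the $\iota$-orbit is closed under $\kappa$-transforms at every vertex that is non-isolated in the current interlacement graph; the failure is confined to, and genuinely realized at, isolated vertices, where the matrix chain degenerates ($S^v=S$ and the modified-inversion sequence is trivial) while the $\kappa$-move is not. Any correct version of Theorem \ref{tau} must either exclude or separately handle this case.
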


%

\begin{figure}
[ptb]
\begin{center}
\includegraphics[
trim=0.858409in 7.166924in 0.862532in 0.863240in,
height=1.8957in,
width=4.5956in
]%
{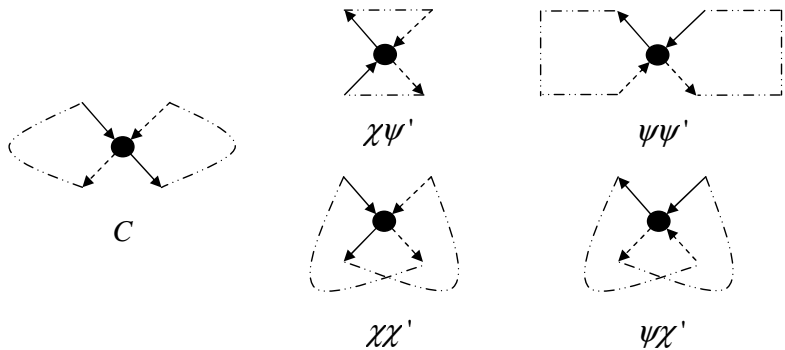}%
\caption{An Euler circuit of $C$ incident at a vertex $v$, along with the four
different ways\ $C\#W$ might be configured at $v$.}%
\label{intext9}%
\end{center}
\end{figure}

The fact that the full interlacement matrix $\mathcal{I}(C\#W)$ is determined
by $\mathcal{I}(C)$ and $W$ implies that the transition labels of $C\#W$ are
also determined. Suppose $v\in V(F)$; we use $\phi,\chi,\psi$ to label the
three transitions at $v$ with respect to $C$, and $\phi^{\prime},\chi^{\prime
},\psi^{\prime}$ to label the three transitions at $v$ with respect to $C\#W$.
Let $M$ denote the nonsingular matrix obtained from $\mathcal{I}(C)$ by
toggling the diagonal entries corresponding to elements of $W$, and let
$X\subseteq V(F)$ denote the set of vertices corresponding to nonzero diagonal
entries of $M^{-1}$. Then $\phi^{\prime}=\psi$ if $v\in W$, $\phi^{\prime
}=\chi$ if $v\not \in W$, $\phi=\psi^{\prime}$ if $v\in X$ and $\phi
=\chi^{\prime}$ if $v\notin X$. Consequently $v\in W\cap X$ implies
$\phi^{\prime}=\psi$, $\phi=\psi^{\prime}$ and $\chi=\chi^{\prime}$; $v\in
W-X$ implies $\phi^{\prime}=\psi$, $\phi=\chi^{\prime}$ and $\chi=\psi
^{\prime}$; $v\in X-W$ implies $\phi^{\prime}=\chi$, $\phi=\psi^{\prime}$ and
$\psi=\chi^{\prime}$; and $v\in V(F)-W-X$ implies $\phi^{\prime}=\chi$,
$\phi=\chi^{\prime}$ and $\psi=\psi^{\prime}$. See\ Figure \ref{intext9},
where the four cases are indexed by first listing $\phi^{\prime}$ with respect
to $C$ and then listing $\phi$ with respect to $C^{\prime}$, so that $\chi
\psi^{\prime}$ represents $v\in X-W$, $\psi\psi^{\prime}$ represents $v\in
W\cap X$, $\psi\chi^{\prime}$ represents $v\in W-X$ and $\chi\chi^{\prime}$
represents $v\in V(F)-W-X$.

Theorem \ref{intinv} also implies that interlacement matrices satisfy a
limited kind of multiplicative functoriality, which we have not seen mentioned elsewhere.

\begin{corollary}
\label{product} Let $C$ and $C^{\prime}$ be Euler systems of a 4-regular
multigraph $F$, and let $P$ be the circuit partition described by: (a) at
every vertex where $C$ and $C^{\prime}$ involve the same transition, $P$
involves the same transition; and (b) at every vertex where $C$ and
$C^{\prime}$ involve two different transitions, $P$ involves the third
transition. Then%
\[
\mathcal{I}_{P}(C)=\mathcal{I}_{C^{\prime}}(C)\cdot\mathcal{I}_{P}(C^{\prime
}).
\]

\end{corollary}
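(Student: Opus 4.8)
The plan is to reduce the matrix identity to a short algebraic manipulation powered by Theorem \ref{intinv}, after recording two additive relations among relative interlacement matrices. Write $A\subseteq V(F)$ for the set of vertices at which $C$ and $C'$ involve the same transition (case (a) of the statement), and $B=V(F)\setminus A$ for the vertices at which they differ (case (b)). Let $E$ be the diagonal $GF(2)$-matrix with a $1$ in each position indexed by $B$ and $0$ elsewhere. I expect the whole corollary to follow from the single pair of observations
\[
\mathcal{I}_P(C)=\mathcal{I}_{C'}(C)+E\quad\text{and}\quad \mathcal{I}_P(C')=\mathcal{I}_C(C')+E,
\]
together with the relation $\mathcal{I}_{C'}(C)\,\mathcal{I}_C(C')=I$ supplied by Theorem \ref{intinv}.

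To establish the first relation I would compare $\mathcal{I}_P(C)$ and $\mathcal{I}_{C'}(C)$ entrywise using Definition \ref{relint}, working relative to $C$. At a vertex $v\in A$ the systems $C$ and $C'$ share a transition, so $C'$ carries the label $\phi$ relative to $C$; the partition $P$ uses that same shared transition and hence also carries $\phi$. Both matrices therefore undergo the case-(i) modification (killing the row and column and setting the diagonal entry to $1$), so they agree on all rows and columns indexed by $A$. At a vertex $v\in B$ the transition of $C$ is $\phi$, and the transitions of $C'$ and of $P$ are the two remaining matchings; relative to $C$ these are labeled $\chi$ and $\psi$ in some order, and crucially $C'$ and $P$ receive \emph{complementary} labels because they occupy the two non-$\phi$ matchings. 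Since the $\chi$-modification does nothing while the $\psi$-modification only toggles the diagonal entry, $\mathcal{I}_P(C)$ and $\mathcal{I}_{C'}(C)$ differ exactly in the single diagonal position at $v$ and agree off the diagonal there. Summing over $v$ gives $\mathcal{I}_P(C)=\mathcal{I}_{C'}(C)+E$, and the second relation is the same statement with the roles of $C$ and $C'$ exchanged, using that $A$, $B$, and therefore $E$, are symmetric under $C\leftrightarrow C'$.

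I would then note that, because the case-(i) modification replaces each row and column indexed by $A$ with a standard basis vector, $\mathcal{I}_{C'}(C)$ acts as the identity on the coordinate subspace indexed by $A$; that is, $\mathcal{I}_{C'}(C)(I+E)=I+E$, equivalently $\mathcal{I}_{C'}(C)\,E=\mathcal{I}_{C'}(C)+I+E$ over $GF(2)$. The corollary now follows from
\[
\mathcal{I}_{C'}(C)\,\mathcal{I}_P(C')=\mathcal{I}_{C'}(C)\bigl(\mathcal{I}_C(C')+E\bigr)=I+\mathcal{I}_{C'}(C)\,E=\mathcal{I}_{C'}(C)+E=\mathcal{I}_P(C),
\]
where the second equality invokes Theorem \ref{intinv} and the third substitutes the displayed expression for $\mathcal{I}_{C'}(C)\,E$.

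I anticipate the main obstacle to lie in the bookkeeping of transition labels for the two additive relations, and specifically in justifying the complementarity of the $\chi/\psi$ labels at the vertices of $B$. The temptation is to determine which of $\chi,\psi$ each system carries relative to the other, but this datum genuinely depends on the global circuit structure and is never needed; one only needs that at a vertex of $B$ the two systems occupy the two non-$\phi$ matchings. The symmetric additive formulation is exactly what allows this delicate local information to cancel, leaving Theorem \ref{intinv} and the decoupled structure on $A$ to carry the argument.
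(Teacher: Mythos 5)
Your proposal is correct and follows essentially the same route as the paper: your diagonal matrix $E$ is the paper's $I^{\prime}$, your two additive relations $\mathcal{I}_{P}(C)=\mathcal{I}_{C^{\prime}}(C)+E$ and $\mathcal{I}_{P}(C^{\prime})=\mathcal{I}_{C}(C^{\prime})+E$ are exactly what the paper reads off from Definition \ref{relint}, and your absorption identity $\mathcal{I}_{C^{\prime}}(C)(I+E)=I+E$ is a restatement of the paper's step $I^{\prime}+\mathcal{I}_{C^{\prime}}(C)=I+\mathcal{I}_{C^{\prime}}(C)\cdot I^{\prime}$, with Theorem \ref{intinv} closing the chain in both arguments. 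The only difference is that you spell out the entrywise verifications the paper leaves implicit, which is a fair expansion rather than a new idea.
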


\begin{proof}
Let $I$ be the identity matrix, and let $I^{\prime}$ be the diagonal matrix
whose $vv$ entry is $1$ if and only if $C$ and $C^{\prime}$ involve different
transitions at $v$. Then%
\begin{gather*}
\mathcal{I}_{P}(C)=I^{\prime}+\mathcal{I}_{C^{\prime}}(C)=I+\mathcal{I}%
_{C^{\prime}}(C)\cdot I^{\prime}\\
=\mathcal{I}_{C^{\prime}}(C)\cdot(\mathcal{I}_{C}(C^{\prime})+I^{\prime
})=\mathcal{I}_{C^{\prime}}(C)\cdot\mathcal{I}_{P}(C^{\prime}).
\end{gather*}

\end{proof}

\begin{corollary}
Suppose a 4-regular multigraph $F$ has three pairwise compatible Euler systems
$C$, $C^{\prime}$ and $C^{\prime\prime}$. (That is, no two of $C,C^{\prime
},C^{\prime\prime}$ involve the same transition at any vertex). Then%
\[
\mathcal{I}_{C}(C^{\prime})\cdot\mathcal{I}_{C^{\prime\prime}}(C)\cdot
\mathcal{I}_{C^{\prime}}(C^{\prime\prime})
\]
is the identity matrix.
\end{corollary}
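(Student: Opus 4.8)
The plan is to reduce everything to the two results already established, namely the inversion formula of Theorem \ref{intinv} and the multiplicative relation of Corollary \ref{product}; the pairwise compatibility hypothesis is exactly what makes these two results interlock.

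First I would record the purely combinatorial observation that at each vertex of $F$ there are precisely three transitions, and that since $C$, $C^{\prime}$, $C^{\prime\prime}$ are pairwise compatible they must occupy the three distinct transitions at every vertex. In particular, for any two of the Euler systems the \emph{third} transition at a vertex --- the one used by neither --- is exactly the transition used by the remaining Euler system.

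Next I would apply Corollary \ref{product} to the pair $(C,C^{\prime})$. Because $C$ and $C^{\prime}$ are compatible they differ at every vertex, so the circuit partition $P$ of Corollary \ref{product} involves the third transition at every vertex; by the observation above this third transition is precisely the one used by $C^{\prime\prime}$, so $P=C^{\prime\prime}$ as circuit partitions. Hence $\mathcal{I}_{P}(C)=\mathcal{I}_{C^{\prime\prime}}(C)$ and $\mathcal{I}_{P}(C^{\prime})=\mathcal{I}_{C^{\prime\prime}}(C^{\prime})$, and Corollary \ref{product} yields the key identity
\[
\mathcal{I}_{C^{\prime\prime}}(C)=\mathcal{I}_{C^{\prime}}(C)\cdot\mathcal{I}_{C^{\prime\prime}}(C^{\prime}).
\]

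Finally I would substitute this identity for the middle factor of the given product and telescope using Theorem \ref{intinv}. Writing the product as $\mathcal{I}_{C}(C^{\prime})\cdot\left(\mathcal{I}_{C^{\prime}}(C)\cdot\mathcal{I}_{C^{\prime\prime}}(C^{\prime})\right)\cdot\mathcal{I}_{C^{\prime}}(C^{\prime\prime})$ and regrouping, the leading pair $\mathcal{I}_{C}(C^{\prime})\cdot\mathcal{I}_{C^{\prime}}(C)$ collapses to the identity by Theorem \ref{intinv}, since $\mathcal{I}_{C}(C^{\prime})=\mathcal{I}_{C^{\prime}}(C)^{-1}$; this leaves $\mathcal{I}_{C^{\prime\prime}}(C^{\prime})\cdot\mathcal{I}_{C^{\prime}}(C^{\prime\prime})$, and applying Theorem \ref{intinv} once more to the pair $(C^{\prime},C^{\prime\prime})$ collapses this to the identity as well. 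The only genuinely nonroutine point is the identification carried out in the second step --- verifying that the abstract circuit partition $P$ produced by Corollary \ref{product} really is the Euler system $C^{\prime\prime}$ --- and this is precisely where the three-way compatibility hypothesis does all the work; once that identification is made, the remainder is a two-step cancellation.
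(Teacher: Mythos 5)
Your proof is correct and takes essentially the same route as the paper: the paper's entire proof is the single identity $\mathcal{I}_{C^{\prime\prime}}(C)=\mathcal{I}_{C^{\prime}}(C)\cdot\mathcal{I}_{C^{\prime\prime}}(C^{\prime})$, obtained from Corollary \ref{product} by taking $P=C^{\prime\prime}$, which is exactly your key second step. The verification that $P$ really is $C^{\prime\prime}$ (via the three-transitions observation) and the two cancellations by Theorem \ref{intinv} that you carry out explicitly are simply left to the reader in the paper.
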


\begin{proof}
By Corollary \ref{product}, $\mathcal{I}_{C^{\prime\prime}}(C)=\mathcal{I}%
_{C^{\prime}}(C)\cdot\mathcal{I}_{C^{\prime\prime}}(C^{\prime})$.
\end{proof}

\section{Examples}%

\begin{figure}
[h]
\begin{center}
\includegraphics[
trim=1.002714in 5.876877in 1.148668in 1.006578in,
height=2.6982in,
width=4.2955in
]%
{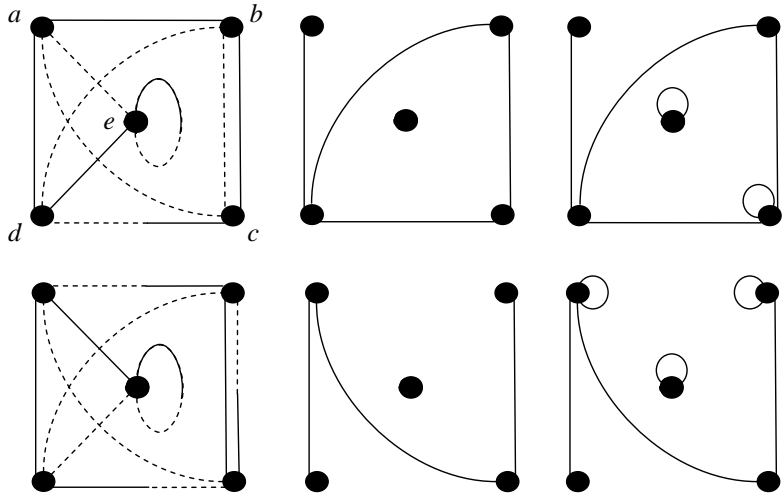}%
\caption{Two\ Euler circuits in the graph of Figure \ref{intext5}, the
interlacement graphs $\mathcal{I}(C)$ and $\mathcal{I}(C^{\prime})$, and the
looped graphs with adjacency matrices $\mathcal{I}_{C^{\prime}}(C)$ and
$\mathcal{I}_{C}(C^{\prime})$.}%
\label{intext5a}%
\end{center}
\end{figure}
Before giving proofs we discuss some illustrative examples. In the first
column of Figure \ref{intext5a} we see two\ Euler circuits $C$ and $C^{\prime
}$ in the graph $F$ of Figure \ref{intext5}. $C$ is given by the double
occurrence word $abcdbcaeed$, and $C^{\prime}$ is given by $abcbdeeadc$. In
the second column we see the simple graphs $\mathcal{I}(C)$ and $\mathcal{I}%
(C^{\prime})$, and in the third column we see the looped graphs whose
adjacency matrices are%
\[
\text{ }\mathcal{I}_{C^{\prime}}(C)=%
\begin{pmatrix}
0 & 0 & 0 & 1 & 0\\
0 & 0 & 1 & 1 & 0\\
0 & 1 & 1 & 1 & 0\\
1 & 1 & 1 & 0 & 0\\
0 & 0 & 0 & 0 & 1
\end{pmatrix}
\text{ and }\mathcal{I}_{C}(C^{\prime})=%
\begin{pmatrix}
1 & 0 & 1 & 1 & 0\\
0 & 1 & 1 & 0 & 0\\
1 & 1 & 0 & 0 & 0\\
1 & 0 & 0 & 0 & 0\\
0 & 0 & 0 & 0 & 1
\end{pmatrix}
.
\]
(The rows and columns correspond to $a,b,c,d,e$ respectively.) Observe that
$\mathcal{I}_{C}(C^{\prime})=\mathcal{I}_{C^{\prime}}(C)^{-1}$, in accordance
with Theorem \ref{intinv}.%
\begin{figure}
[pth]
\begin{center}
\includegraphics[
trim=1.000240in 7.881476in 2.585121in 1.008717in,
height=1.292in,
width=3.2854in
]%
{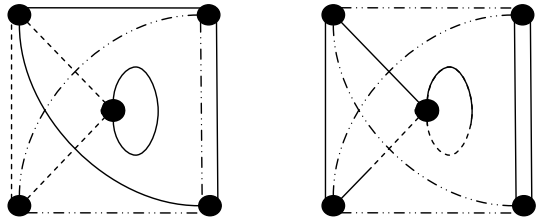}%
\caption{A four-circuit partition $P$ and a three-circuit partition
$P^{\prime}$.}%
\label{intext5b}%
\end{center}
\end{figure}

In\ Figure \ref{intext5b}, we see two circuit partitions in $F$; $P$
corresponds to the set of four words $\{e,ade,abc,bcd\}$ and $P^{\prime}$
corresponds to $\{aeed,bc,abcd\}$. (In order to trace a circuit in the figure,
maintain the dash pattern while traversing each vertex. Note however that the
dash pattern may change in the middle of an edge; this is done to prevent the
same dash pattern from appearing four times at any vertex, as that would
confuse the transitions.) $P$ and $P^{\prime}$ have%
\[
\mathcal{I}_{P}(C)=%
\begin{pmatrix}
0 & 0 & 0 & 0 & 0\\
0 & 1 & 0 & 0 & 0\\
0 & 0 & 0 & 0 & 0\\
0 & 0 & 0 & 1 & 0\\
0 & 0 & 0 & 0 & 0
\end{pmatrix}
\text{, }\mathcal{I}_{P}(C^{\prime})=%
\begin{pmatrix}
1 & 0 & 0 & 0 & 0\\
0 & 1 & 1 & 0 & 0\\
0 & 1 & 1 & 0 & 0\\
0 & 0 & 0 & 0 & 0\\
0 & 0 & 0 & 0 & 0
\end{pmatrix}
,
\]%
\[
\mathcal{I}_{P^{\prime}}(C)=%
\begin{pmatrix}
0 & 0 & 0 & 0 & 0\\
0 & 1 & 1 & 0 & 0\\
0 & 1 & 1 & 0 & 0\\
0 & 0 & 0 & 1 & 0\\
0 & 0 & 0 & 0 & 1
\end{pmatrix}
\text{ and }\mathcal{I}_{P^{\prime}}(C^{\prime})=%
\begin{pmatrix}
1 & 0 & 0 & 0 & 0\\
0 & 0 & 0 & 0 & 0\\
0 & 0 & 1 & 0 & 0\\
0 & 0 & 0 & 0 & 0\\
0 & 0 & 0 & 0 & 1
\end{pmatrix}
.
\]
In accordance with the circuit-nullity formula, $\nu(\mathcal{I}_{P}%
(C))=\nu(\mathcal{I}_{P}(C^{\prime}))=3$ and $\nu(\mathcal{I}_{P^{\prime}%
}(C))=\nu(\mathcal{I}_{P^{\prime}}(C^{\prime}))=2$. As predicted by Theorem
\ref{nullspaces}, the nullspace of $\mathcal{I}_{P}(C)$ is spanned by the
relative core vectors $(0,0,0,0,1)$, $(1,0,0,0,1)$, $(1,0,1,0,0)$ and
$(0,0,1,0,0)$. The nullspace of $\mathcal{I}_{P}(C^{\prime})$ is spanned by
the relative core vectors $(0,0,0,0,1)$, $(0,0,0,1,1)$, $(0,1,1,0,0)$ and
$(0,1,1,1,0)$. The nullspace of $\mathcal{I}_{P^{\prime}}(C)$ is spanned by
$(1,0,0,0,0)$, $(0,1,1,0,0)$ and $(1,1,1,0,0)$; and the nullspace of
$\mathcal{I}_{P^{\prime}}(C^{\prime})$ is spanned by $(0,0,0,1,0)$,
$(0,1,0,0,0)$ and $(0,1,0,1,0)$.

\section{Theorem \ref{lcinv}}

\subsection{Modified inverses from local complementation}

Theorem \ref{lcinv} concerns the equivalence relation $\sim_{lc}$ on
zero-diagonal symmetric matrices generated by simple local complementations
$S\mapsto S^{i}$, where $i$ is an arbitrary index. As noted in the
introduction, Theorem \ref{lcinv} is related to results of Brijder and
Hoogeboom \cite{BH, BH1, BH2}, Glantz and Pelillo \cite{GP} and Ilyutko
\cite{I1, I} regarding an equivalence relation $\sim_{piv}$ on symmetric
matrices that is generated by two kinds of operations: (non-simple) local
complementations $S\mapsto S_{ns}^{i}$ where the $i^{th}$ diagonal entry is
$1$, and edge pivots $S\mapsto S^{ij}$ where the $i^{th}$ and $j^{th}$
diagonal entries are $0$ and the $ij$ and $ji$ entries are $1$. (The reader
familiar with the work of Arratia, Bollob\'{a}s and Sorkin \cite{A1, A2, A}
should be advised that the operation they denote $S^{ij}$ differs from this
one by interchanging the $i^{th}$ and $j^{th}$ rows and columns.) It is well
known that an edge pivot $S^{ij}$ of a symmetric matrix $S$ coincides with two
triple simple local complementations, $S^{ij}=((S^{i})^{j})^{i}=((S^{j}%
)^{i})^{j}$. (See \cite{BH1} for an extension of this familiar equality to set
systems.) This description of the pivot is not very useful in connection with
$\sim_{piv}$, as the individual local complementations with respect to $i$ and
$j$ are \textquotedblleft illegal\textquotedblright\ for $\sim_{piv}$ when the
$i^{th}$ and $j^{th}$ diagonal entries are $0$. It is useful for us, though:
as non-simple local complementation coincides with simple local
complementation off the diagonal, the description of the pivot implies that if
$S$ and $T$ are zero-diagonal symmetric matrices, and $S^{\prime}$ and
$T^{\prime}$ are symmetric matrices that are (respectively) equal to $S$ and
$T$ except for diagonal entries, then $S^{\prime}\sim_{piv}T^{\prime
}\Rightarrow S\sim_{lc}T$.

This implication allows us to prove half of Theorem \ref{lcinv} using a result
noted in \cite{BH, BH1, BH2, GP}, namely that symmetric matrices related by
the principal pivot transform are also related by $\sim_{piv}$. Suppose
$S_{1}$ is a zero-diagonal symmetric matrix and $S_{2}$ is a modified inverse
of $S_{1}$. Then there are nonsingular symmetric matrices $S_{1}^{\prime}$ and
$S_{2}^{\prime}$ that are (respectively) equal to $S_{1}$ and $S_{2}$ except
for diagonal entries, and have $(S_{1}^{\prime})^{-1}=S_{2}^{\prime}$. As
inverses, $S_{1}^{\prime}$ and $S_{2}^{\prime}$ are related by the principal
pivot transform, so $S_{1}^{\prime}\sim_{piv}S_{2}^{\prime}$; consequently the
implication of the last paragraph tells us that $S_{1}\sim_{lc}S_{2}$.
Iterating this argument, we conclude that $S\sim_{mi}T\Rightarrow S\sim_{lc}T$.

\subsection{A brief discussion of the principal pivot transform}

The principal pivot transform (ppt) was introduced by Tucker \cite{Tu}; a
survey of its properties was given by Tsatsomeros \cite{Ts}. The reader who
would like to learn about the ppt and its relationship with graph theory
should consult these papers,\ the work of Brijder and Hoogeboom \cite{BH, BH1,
BH2} and Glantz and Pelillo \cite{GP}, and the references given there. For the
convenience of the reader who simply wants to understand the argument above,
we sketch the details briefly.

The principal pivot transform is valuable for us because it provides a way to
obtain the inverse of a nonsingular symmetric $GF(2)$-matrix incrementally,
using pivots and non-simple local complementations. This property is not
immediately apparent from the definition:

\begin{definition}
\label{ppt} Suppose
\[
M=%
\begin{pmatrix}
P & Q\\
R & S
\end{pmatrix}
\]
is an $n\times n$ matrix with entries in a field $F$, and suppose $P$ is a
nonsingular principal submatrix involving the rows and columns whose indices
lie in a set $X$. Then
\[
M\ast X=%
\begin{pmatrix}
P^{-1} & -P^{-1}Q\\
RP^{-1} & S-RP^{-1}Q
\end{pmatrix}
.
\]
In particular, if $M$ is nonsingular then $M\ast\{1,...,n\}=M^{-1}$.
\end{definition}

The matrix $M$ is displayed in the given form only for convenience; the
definition may be applied to any set of indices $X$ whose corresponding
principal submatrix $P$ is nonsingular.

A direct calculation (which we leave to the reader) yields an alternative
characterization: if we think of $F^{n}$ as the direct sum of a subspace
corresponding to indices from $X$ and a subspace corresponding to the rest of
the indices, then the linear endomorphism of $F^{n}$ corresponding to $M\ast
X$ is related to the linear endomorphism corresponding to $M$ by the relation
\[
M%
\begin{pmatrix}
x_{1}\\
x_{2}%
\end{pmatrix}
=%
\begin{pmatrix}
y_{1}\\
y_{2}%
\end{pmatrix}
\text{ if and only if }(M\ast X)%
\begin{pmatrix}
y_{1}\\
x_{2}%
\end{pmatrix}
=%
\begin{pmatrix}
x_{1}\\
y_{2}%
\end{pmatrix}
.
\]
This characterization directly implies two useful properties.

(1) Suppose $X_{1}$ and $X_{2}$ are subsets of $\{1,...,n\}$, and $(M\ast
X_{1})\ast X_{2}$ is defined. Then $M\ast(X_{1}\Delta X_{2})$ is also defined,
and it is the same as $(M\ast X_{1})\ast X_{2}$. (Here $\Delta$ denotes the
symmetric difference, $X_{1}\Delta X_{2}=(X_{1}\cup X_{2})-(X_{1}\cap X_{2})$.)

(2) If $M$ is nonsingular then the submatrix \thinspace$S-RP^{-1}Q$ of $M\ast
X$ must be nonsingular too. For%
\[
(M\ast X)%
\begin{pmatrix}
\mathbf{0}\\
x_{2}%
\end{pmatrix}
=%
\begin{pmatrix}
x_{1}\\
\mathbf{0}%
\end{pmatrix}
\text{ implies }M%
\begin{pmatrix}
x_{1}\\
x_{2}%
\end{pmatrix}
=%
\begin{pmatrix}
\mathbf{0}\\
\mathbf{0}%
\end{pmatrix}
,
\]
which requires $x_{1}=\mathbf{0}$ and $x_{2}=\mathbf{0}$.

Note also that if $M$ is a symmetric $GF(2)$-matrix, then so is $M\ast X$.

Suppose now that $M$ is a nonsingular symmetric $GF(2)$-matrix. We begin a
recursive calculation by performing principal pivot transforms $\ast
\{i_{1}\},\ast\{i_{2}\}$, ..., $\ast\{i_{k}\}$ for as long as we can, subject
to the proviso that $i_{1}$, ..., $i_{k}$ are pairwise distinct. It is a
direct consequence of Definition \ref{ppt} that each transformation
$\ast\{i_{j}\}$ is the same as non-simple local complementation with respect
to $i_{j}$, because $P^{-1}$ is the $1\times1$ matrix $(1)$, $R$ is the
transpose of $Q$, and an entry of $RQ$ is $1$ if and only if its row and
column correspond to nonzero entries of $R$ and $Q$. For convenience we
display the special case $\{i_{1},...,i_{k}\}=\{1,...,k\}$:%
\[
M\ast\{i_{1},...,i_{k}\}=M\ast\{i_{1}\}\ast\cdots\ast\{i_{k}\}=M^{\prime}=%
\begin{pmatrix}
P^{\prime} & Q^{\prime}\\
R^{\prime} & S^{\prime}%
\end{pmatrix}
,
\]
where the diagonal entries of $P^{\prime}$ (resp. $S^{\prime}$) are all $1$
(resp. all $0$) and $R^{\prime}$ is the transpose of $Q^{\prime}.$ Note that
by property (2), $S^{\prime}$ is nonsingular; hence each row of $S^{\prime}$
must certainly contain a nonzero entry. Consequently we may (again for
convenience) display the matrix in a different way:%
\[
M\ast\{i_{1},...,i_{k}\}=M^{\prime}=%
\begin{pmatrix}
P^{\prime\prime} & Q^{\prime\prime}\\
R^{\prime\prime} & S^{\prime\prime}%
\end{pmatrix}
\text{ where }P^{\prime\prime}=%
\begin{pmatrix}
0 & 1\\
1 & 0
\end{pmatrix}
.
\]
The next step in the calculation is to perform the principal pivot transform
$\ast\{i_{k+1},i_{k+2}\}$, where $i_{k+1}$ and $i_{k+2}$ are the indices
involved in $P^{\prime\prime}$. A direct calculation using Definition
\ref{ppt} shows that
\[
M^{\prime}\ast\{i_{k+1},i_{k+2}\}=M^{\prime\prime}=%
\begin{pmatrix}
P^{\prime\prime} & Q^{\prime\prime\prime}\\
R^{\prime\prime\prime} & S^{\prime\prime}+X
\end{pmatrix}
\]
where $Q^{\prime\prime\prime}$ is obtained by interchanging the two rows of
$Q^{\prime\prime}$, $R^{\prime\prime\prime}$ is the transpose of
$Q^{\prime\prime\prime}$, and $X$ is the matrix whose $ij$ entry is
\[
(R^{\prime\prime})_{ii_{k+1}}(Q^{\prime\prime})_{i_{k+2}j}+(R^{\prime\prime
})_{ii_{k+2}}(Q^{\prime\prime})_{i_{k+1}j}=(Q^{\prime\prime})_{i_{k+1}%
i}(Q^{\prime\prime})_{i_{k+2}j}+(Q^{\prime\prime})_{i_{k+2}i}(Q^{\prime\prime
})_{i_{k+1}j}.
\]
That is, $X_{ij}=0$ unless the $i^{th}$ and $j^{th}$ columns of $Q^{\prime
\prime}$ are distinct nonzero vectors. It follows that $M^{\prime\prime}$ is
the pivot $(M^{\prime})^{i_{k+1}i_{k+2}}$; equivalently, $M^{\prime\prime}$ is
the triple simple local complement $(((M^{\prime})^{i_{k+1}})^{i_{k+2}%
})^{i_{k+1}}$.

The resulting matrix $M^{\prime\prime}$ has no new nonzero diagonal entry, so
the second step may be repeated as many times as necessary. At no point do we
re-use an index that has already been involved in a principal pivot transform,
at no point do we obtain a non-symmetric matrix, and at no point is the
principal submatrix determined by the as-yet unused indices singular.
Consequently the calculation proceeds until every index has been involved in
precisely one principal pivot transform. By property (1), at the end of the
calculation we have obtained $M\ast\{1,...,n\}=M^{-1}$ using individual steps
each of which is either a non-simple local complementation or a pivot.

\subsection{The $\nu,\nu,\nu+1$ lemma}

Lemma 2 of Balister, Bollob\'{a}s, Cutler and Pebody \cite{BBCS} is a very
useful result about the nullities of three related matrices, which we cite in
the next section. Here is a sharpened form of the lemma, involving the
nullspaces of the three matrices rather than only their nullities.

\begin{lemma}
\label{twothree} Suppose $M$ is a symmetric $GF(2)$-matrix. Let $\rho$ be an
arbitrary row vector, and let $\mathbf{0}$ be the row vector with all entries
$0$; denote their transposes $\kappa$ and $\mathbf{0}$ respectively. Let
$M_{1},M_{2}$ and $M_{3}$ denote the indicated symmetric matrices.%
\[
M_{1}=%
\begin{pmatrix}
M & \kappa\\
\rho & 1
\end{pmatrix}
\quad\quad M_{2}=%
\begin{pmatrix}
M & \kappa\\
\rho & 0
\end{pmatrix}
\quad\quad M_{3}=%
\begin{pmatrix}
M & \mathbf{0}\\
\mathbf{0} & 1
\end{pmatrix}
\]
Then two of $M_{1},M_{2},M_{3}$ have the same nullspace, say of dimension
$\nu$. The nullspace of the remaining matrix has dimension $\nu+1$, and it
contains the nullspace shared by the other two.
\end{lemma}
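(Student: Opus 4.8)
The plan is to parametrize a null vector of each $M_i$ as $\binom{x}{t}$ with $x\in GF(2)^{n}$ and $t\in GF(2)$, and to express all three nullspaces as kernels of linear functionals on one common subspace. Writing $M_i\binom{x}{t}=\mathbf 0$ out by blocks, the top $n$ equations read $Mx+t\kappa=\mathbf 0$ for $M_1$ and $M_2$, and $Mx=\mathbf 0$ for $M_3$; in every case the null condition forces $\binom{x}{t}$ into $U:=\ker(M\mid\kappa)=\{\binom{x}{t}:Mx+t\kappa=\mathbf 0\}$, and the one remaining (bottom) equation cuts $N(M_i)$ out of $U$ as the kernel of a single functional. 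Explicitly $N(M_1)=\ker(f|_U)$, $N(M_2)=\ker(g|_U)$ and $N(M_3)=\ker(h|_U)$, where $f\binom{x}{t}=\rho x+t$, $g\binom{x}{t}=\rho x$ and $h\binom{x}{t}=t$. Since we work over $GF(2)$ these three functionals satisfy $f+g+h=0$.

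In this language the lemma becomes a statement about three functionals on $U$ with $f+g+h=0$: I claim exactly one of $f|_U,g|_U,h|_U$ is the zero functional and the other two coincide. Granting the claim, the conclusion is immediate: the matrix whose functional vanishes has nullspace all of $U$, of dimension $d:=\dim U$; the remaining two share the nullspace $\ker(\ell|_U)$ of the common nonzero functional $\ell$, of dimension $d-1$; and as that kernel is a subspace of $U$ the shared nullspace sits inside the larger one. Taking $\nu=d-1$ reproduces the asserted pattern. The claim itself reduces, via $f+g+h=0$ and a short $GF(2)$ check, to two facts: (A) $g|_U$ and $h|_U$ are linearly dependent, and (B) they are not both zero. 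Indeed dependence over $GF(2)$ means $g|_U=0$, or $h|_U=0$, or $g|_U=h|_U$; in each case $f=g+h$ together with (B) leaves precisely one functional equal to $0$ and the other two equal.

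The crux---and the only point at which the symmetry of $M$ is used---is (A), which I would obtain from the identity
\[
g(v)\,h(w)=g(w)\,h(v)\qquad\text{for all }v,w\in U.
\]
For $v=\binom{x}{t}$ and $w=\binom{y}{s}$ in $U$ we have $Mx=t\kappa$ and $My=s\kappa$, so with $\kappa=\rho^{T}$ and $M=M^{T}$ the scalar $y^{T}Mx$ equals $t\,\rho y=h(v)g(w)$ on one hand and $s\,\rho x=h(w)g(v)$ on the other. The identity says every pair of image vectors $(g(v),h(v)),(g(w),h(w))\in GF(2)^{2}$ has vanishing $2\times2$ determinant, so the image of $v\mapsto(g(v),h(v))$ lies on a line and $g|_U,h|_U$ are dependent. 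For (B), if $h|_U=0$ then no element of $U$ has last coordinate $1$, i.e.\ $Mx=\kappa$ is unsolvable; equivalently $\rho$ fails to annihilate $\ker M$, and since here $U=\ker M\times\{0\}$ this gives $g|_U\neq0$. I expect no real obstacle beyond bookkeeping---tracking the three bottom equations and recalling that $-1=1$ over $GF(2)$---once the functional identity is in place.
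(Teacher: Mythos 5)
Your proof is correct, and it takes a genuinely different route from the paper's. The paper argues by cases on whether $\rho\in\operatorname{row}(M)$: in Case 1 it computes $\nu(M_{1})=\nu(M_{2})=\nu(M)-1=\nu(M_{3})-1$ and identifies $\ker M_{1}=\ker M_{2}$ explicitly as $\left\{ \begin{pmatrix} y & 0 \end{pmatrix} : y\in\ker M,\ y\cdot\kappa=0\right\}$; in Case 2 it shows by a rank computation that exactly one of $M_{1},M_{2}$ has rank $rk(M)$ while the other has rank $rk(M)+1=rk(M_{3})$, and that $\ker M_{3}\subseteq(\ker M_{1})\cap(\ker M_{2})$, from which the conclusion follows by counting. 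You instead work uniformly inside the single space $U=\left\{\binom{x}{t}:Mx+t\kappa=\mathbf{0}\right\}$, cut each of the three nullspaces out of $U$ by one of three functionals satisfying $f=g+h$, and derive the pairwise dependence of $g|_{U},h|_{U}$ from the bilinear identity $g(v)h(w)=g(w)h(v)$ on $U$ --- which is the single point where $M=M^{T}$ and $\kappa=\rho^{T}$ enter. Your trichotomy (exactly which of $f,g,h$ vanishes on $U$) in fact refines the paper's dichotomy: $h|_{U}=0$ is precisely the paper's Case 1 ($\kappa\notin\operatorname{col}(M)$), while $g|_{U}=0$ and $f|_{U}=0$ are the two possible outcomes of its Case 2. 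What your approach buys: no rank bookkeeping, no case split in the main argument, and the dimension count and the containment statement fall out simultaneously, with the use of symmetry isolated in one line. What the paper's approach buys: explicit descriptions of the kernels in each case, and a proof that stays close to the Balister--Bollob\'{a}s--Cutler--Pebody nullity lemma it generalizes. One step you pass over quickly deserves an explicit line: in (B), the equivalence between insolvability of $Mx=\kappa$ and the existence of $y\in\ker M$ with $\rho y\neq0$ is the duality $\operatorname{col}(M)=(\ker M)^{\perp}$ for symmetric $M$, which holds over $GF(2)$ by rank--nullity because the standard bilinear form is nondegenerate (even though it is isotropic); this is standard, but it is the one nontrivial linear-algebra fact your argument invokes, and the analogous duality is also used silently in the paper's proof.
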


\begin{proof}
We use $rk(N)$ and $\nu(N)$ to denote the rank and nullity of a matrix $N$,
$\operatorname{row}(N)$ and $\operatorname{col}(N)$ to denote the spaces
spanned by the rows and columns of $N$, and $\ker N$ to denote the nullspace
of $N$, i.e., the space of row vectors $x$ with $x\cdot N=\mathbf{0}$.

\textbf{Case 1}. Suppose $\rho\notin\operatorname{row}(M)$; then also
$\kappa\notin\operatorname{col}(M)$.

Observe that $\nu(M_{1})=\nu(M_{2})=\nu(M_{3})-1=\nu(M)-1$ in this case.

We claim that $\ker M_{3}\supseteq(\ker M_{1})\cap(\ker M_{2})$. Note first
that no row vector $%
\begin{pmatrix}
y & 1
\end{pmatrix}
$ can be an element of both $\ker M_{1}$ and $\ker M_{2}$, because $%
\begin{pmatrix}
y & 1
\end{pmatrix}
\cdot M_{1}=\mathbf{0}\Rightarrow y\cdot\kappa=1$ and $%
\begin{pmatrix}
y & 1
\end{pmatrix}
\cdot M_{2}=\mathbf{0}\Rightarrow y\cdot\kappa=0$. Consequently every
$x\in(\ker M_{1})\cap(\ker M_{2})$ is of the form $%
\begin{pmatrix}
y & 0
\end{pmatrix}
$ for some $y\in\ker M$. Obviously
\[
\ker M_{3}=\left\{
\begin{pmatrix}
y & 0
\end{pmatrix}
\text{
$\vert$
}y\in\ker M\right\}  ,
\]
so the claim is verified.

The inclusion
\[
(\ker M_{1})\cap(\ker M_{2})\supseteq\left\{
\begin{pmatrix}
y & 0
\end{pmatrix}
\text{
$\vert$
}y\in\ker M\text{ and }y\cdot\kappa=0\right\}
\]
is obvious, but note that the dimension of the right hand side is at least
$\nu(M)-1$. As $\nu(M)-1=\nu(M_{1})=\nu(M_{2})$, we conclude that
\[
\ker M_{1}=\ker M_{2}=(\ker M_{1})\cap(\ker M_{2})=\left\{
\begin{pmatrix}
y & 0
\end{pmatrix}
\text{
$\vert$
}y\in\ker M\text{ and }y\cdot\kappa=0\right\}  .
\]

\textbf{Case 2}. Suppose $\rho\in\operatorname{row}(M)$; then also $\kappa
\in\operatorname{col}(M)$.

As $\rho\in\operatorname{row}(M)$, at least one of $%
\begin{pmatrix}
\rho & 0
\end{pmatrix}
,%
\begin{pmatrix}
\rho & 1
\end{pmatrix}
$ is an element of $\operatorname{row}%
\begin{pmatrix}
M & \kappa
\end{pmatrix}
$. If both are elements then
\[
rk%
\begin{pmatrix}
M & \kappa
\end{pmatrix}
=rk%
\begin{pmatrix}
M & \kappa\\
\rho & 0\\
\rho & 1
\end{pmatrix}
=rk%
\begin{pmatrix}
M & \kappa\\
\rho & 0\\
\mathbf{0} & 1
\end{pmatrix}
=rk%
\begin{pmatrix}
M & \mathbf{0}\\
\rho & 0\\
\mathbf{0} & 1
\end{pmatrix}
=rk(M)+1
\]
and hence $\kappa\notin\operatorname{col}(M)$, an impossibility. Consequently
one of $M_{1},M_{2}$ is of rank $rk(M)$, and the other is of rank
$rk(M)+1=rk(M_{3})$.

We claim that $\ker M_{3}\subseteq(\ker M_{1})\cap(\ker M_{2})$. Suppose
$x\in\ker M_{3}$; clearly then $x=%
\begin{pmatrix}
y & 0
\end{pmatrix}
$ for some $y\in\ker M$. As $y\in\ker M$, $y\cdot z=0$ $\forall z\in
\operatorname{col}(M)$. It follows that $y\cdot\kappa=0$, and hence $x\in(\ker
M_{1})\cap(\ker M_{2})$ as claimed.

As one of $M_{1},M_{2}$ has nullity $\nu(M_{3})$ and the other has nullity
$\nu(M_{3})+1$, the claim implies that either $\ker M_{3}=\ker M_{2}%
\subset\ker M_{1}$ or $\ker M_{3}=\ker M_{1}\subset\ker M_{2}$.
\end{proof}

\subsection{Local complements from modified inversion}

The implication $S\sim_{lc}T\Rightarrow S\sim_{mi}T$ does not follow directly
from results regarding $\sim_{piv}$, but our argument uses two lemmas very
similar to ones used by Ilyutko \cite{I1}.

\begin{lemma}
\label{two}Let $S$ be an $n\times n$ zero-diagonal symmetric $GF(2)$-matrix.
Suppose $1\leq i\leq n$ and the $i^{th}$ row of $S$ has at least one nonzero
entry. Then there is a nonsingular symmetric $GF(2)$-matrix $M$ that differs
from $S$ only in diagonal entries other than the $i^{th}$.
\end{lemma}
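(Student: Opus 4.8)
The plan is to induct on the size $n$ of $S$, peeling off a single row and column and then using the $\nu,\nu,\nu+1$ lemma (Lemma \ref{twothree}) to put it back. The base cases $n\le 2$ are immediate: the hypothesis that row $i$ is nonzero forces $n\ge 2$, and when $n=2$ it forces $S$ to be the $2\times2$ zero-diagonal matrix with off-diagonal entries $1$, which is already nonsingular, so we may take $M=S$.

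For the inductive step I assume $n\ge 3$. The crucial move is to delete a \emph{well-chosen} vertex $k\ne i$ in such a way that the hypothesis survives, i.e.\ so that row $i$ of the principal submatrix $T$ on $V\setminus\{k\}$ is still nonzero. This is always possible when $n\ge 3$: if $i$ has two or more neighbors then any $k\ne i$ works, since deleting one vertex cannot destroy all of $i$'s neighbors; and if $i$ has a unique neighbor $j$, then I choose $k$ from the nonempty set $V\setminus\{i,j\}$, so that the entry $s_{ij}=1$ persists in $T$. Applying the inductive hypothesis to $T$ (which is zero-diagonal, symmetric, of size $n-1$, with nonzero $i$th row) yields a nonsingular symmetric matrix $M$ differing from $T$ only in diagonal entries other than the $i$th.

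It then remains to reinsert row and column $k$. Writing $\rho$ for the off-diagonal part of row $k$ of $S$ (indexed by $V\setminus\{k\}$) and $\kappa=\rho^{T}$, and treating $k$ as the last index, I form the two bordered matrices $M_{1}$ and $M_{2}$ of Lemma \ref{twothree}: the matrix $M$ bordered by $\rho$ and $\kappa$, with corner entry $1$ or $0$ respectively. Both agree with $S$ off the diagonal (the block $M$ reproduces the off-diagonal entries of $T$, hence of $S$, and the border reproduces row and column $k$), and both differ from $S$ only in diagonal entries away from $i$: the $M$-block contributes toggles at positions $\ne i$, while the corner sits at position $k\ne i$. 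Since $M$ is nonsingular we have $\rho\in\operatorname{row}(M)$, so Lemma \ref{twothree} applies in its Case 2; because $\ker M_{3}=\{\mathbf{0}\}$, exactly one of $M_{1},M_{2}$ is nonsingular while the other has nullity $1$. Taking the nonsingular one as the required matrix completes the induction.

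The step I expect to require the right idea is the choice of which vertex to delete, rather than any hard computation. Deleting $i$ itself, or deleting $i$'s unique neighbor, can leave the reduced matrix with a zero $i$th row and break the induction; the observation that for $n\ge 3$ one can always delete some \emph{other} vertex while keeping row $i$ nonzero is what makes the reduction clean. Once $k$ is chosen correctly, Lemma \ref{twothree} does the remaining work, since the only freedom it leaves—the value of the corner entry, that is, whether or not to toggle the $k$th diagonal entry—is precisely the freedom we are permitted (as $k\ne i$), and the lemma guarantees that one of the two choices is nonsingular.
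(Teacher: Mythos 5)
Your proof is correct and is essentially the paper's argument run in the opposite direction: the paper fixes $i=1$ and $s_{12}=1$, starts from the nonsingular $2\times 2$ principal submatrix on $\{1,2\}$, and adds one index at a time, invoking the $\nu,\nu,\nu+1$ lemma (Lemma \ref{twothree}) exactly as you do to show that at each bordering step one of the two choices of corner (diagonal) entry preserves nonsingularity. Your top-down version needs the extra vertex-selection argument to keep row $i$ nonzero after deletion, a complication the paper's bottom-up ordering avoids, but the decomposition, the key lemma, and the base case are identical.
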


\begin{proof}
We may as well presume that $i=1$, and that $s_{12}\neq0$.

For $k\geq2$ let $S_{k}$ be the submatrix of $S$ obtained by deleting all rows
and columns with indices $>k$. We claim that for every $k\geq2$, there is a
subset $T_{k}\subseteq\{3,...,k\}$ with the property that we obtain a
nonsingular matrix $S_{k}^{\prime}$ by toggling the diagonal entries of
$S_{k}$ with indices in $T_{k}$. When $k=2$ the claim is satisfied by
$T_{2}=\emptyset.$

Proceeding inductively, suppose $k\geq2$ and $T_{k}\subseteq\{3,...,k\}$
satisfies the claim. If we obtain a nonsingular matrix by toggling the
diagonal entries of $S_{k+1}$ with indices in $T_{k}$, then $T_{k+1}=T_{k}$
satisfies the claim for $k+1$. If not, then the $\nu,\nu,\nu+1$ lemma implies
that $T_{k+1}=T_{k}\cup\{k+1\}$ satisfies the claim.

The required matrix $M$ is obtained from $S=S_{n}$ by toggling the diagonal
entries with indices in $T_{n}$.
\end{proof}

\begin{lemma}
\label{one}Suppose
\[
M=%
\begin{pmatrix}
0 & \mathbf{1} & \mathbf{0}\\
\mathbf{1} & M_{11} & M_{12}\\
\mathbf{0} & M_{12} & M_{22}%
\end{pmatrix}
\]
is a nonsingular symmetric matrix, with
\[
M^{-1}=%
\begin{pmatrix}
a & \rho\\
\kappa & N
\end{pmatrix}
.
\]
Then the matrix
\[
M^{\prime}=%
\begin{pmatrix}
0 & \mathbf{1} & \mathbf{0}\\
\mathbf{1} & \bar{M}_{11} & M_{12}\\
\mathbf{0} & M_{12} & M_{22}%
\end{pmatrix}
\]
obtained by toggling all entries within the block $M_{11}$ is also
nonsingular, and
\[
(M^{\prime})^{-1}=%
\begin{pmatrix}
a+1 & \rho\\
\kappa & N
\end{pmatrix}
.
\]

\end{lemma}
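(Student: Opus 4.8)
The plan is to recognize $M'$ as a rank-one update of $M$ and then read off the inverse via the Sherman--Morrison identity specialized to $GF(2)$. Write $e_0$ for the first standard basis (column) vector and let $w=(0,\mathbf{1},\mathbf{0})^{T}$ be the first column of $M$, so $w$ has a $1$ in exactly the positions belonging to the $M_{11}$ block. The first point is that toggling every entry of the $M_{11}$ block is the same as adding the all-ones matrix on that block, and this all-ones block is precisely the outer product $ww^{T}$ (its $(i,j)$ entry is $1$ iff both $i$ and $j$ index the $M_{11}$ block). Hence $M'=M+ww^{T}$.

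The second, and key, point is the structural identity $Me_0=w$, which is nothing more than the statement that $w$ is the first column of $M$. Since $M$ is nonsingular this yields $M^{-1}w=e_0$, and since $M$ is symmetric so is $M^{-1}$, whence $w^{T}M^{-1}=e_0^{T}$. In particular $w^{T}M^{-1}w=e_0^{T}w=w_0=0$, because the zeroth entry of $w$ is the $(0,0)$ entry of $M$, which is $0$. Over $GF(2)$ this gives $1+w^{T}M^{-1}w=1\neq0$, exactly the Sherman--Morrison nonsingularity condition, so $M'=M+ww^{T}$ is invertible.

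To finish I would compute the updated inverse. Sherman--Morrison over $GF(2)$ gives $(M')^{-1}=M^{-1}+M^{-1}ww^{T}M^{-1}$ (the correction is divided by $1$, and subtraction coincides with addition). Substituting $M^{-1}w=e_0$ and $w^{T}M^{-1}=e_0^{T}$ collapses this to $(M')^{-1}=M^{-1}+e_0e_0^{T}$, which differs from $M^{-1}$ only in the $(0,0)$ entry, toggling $a$ to $a+1$, exactly as claimed. Rather than cite Sherman--Morrison, one may instead verify directly that $(M^{-1}+e_0e_0^{T})M'=I$: expanding the product gives $I+e_0w^{T}+e_0w^{T}+0$, where the two cross terms coincide (using $e_0^{T}M=w^{T}$ and $M^{-1}ww^{T}=e_0w^{T}$) and cancel in characteristic $2$, while the final term vanishes because $e_0^{T}w=0$.

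I do not expect a genuine obstacle; the real content is spotting the rank-one structure $M'=M+ww^{T}$, after which everything is a short computation. The only step warranting care is the nonsingularity of $M'$, which in characteristic $2$ reduces to verifying the single scalar $w^{T}M^{-1}w=0$; this is forced by the hypotheses, since $w$ is the first column of $M$ (giving $M^{-1}w=e_0$) and the $(0,0)$ entry of $M$ is $0$ (making the zeroth coordinate of $w$ vanish).
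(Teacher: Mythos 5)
Your proof is correct, and it takes a recognizably different route from the paper's, though the two are computationally equivalent at bottom. The paper argues directly from $M^{-1}\cdot M=I$: it extracts the two parity facts that $\rho$ has an odd number of nonzero entries in the columns of the $\mathbf{1}$ block while each row of $N$ has an even number, and then observes that these force the products $\bigl(\begin{smallmatrix}a & \rho\\ \kappa & N\end{smallmatrix}\bigr)\cdot M$ and $\bigl(\begin{smallmatrix}a+1 & \rho\\ \kappa & N\end{smallmatrix}\bigr)\cdot M^{\prime}$ to be equal. Your identity $M^{-1}w=e_{0}$ encodes exactly those parity facts coordinate by coordinate (its first coordinate is the odd-parity statement about $\rho$; the remaining coordinates are the even-parity statements about the rows of $N$), and your expansion of $(M^{-1}+e_{0}e_{0}^{T})(M+ww^{T})$ is the paper's product equality rewritten in vectorial form. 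What your packaging buys: recognizing $M^{\prime}-M$ as the rank-one matrix $ww^{T}$, with $w$ the first column of $M$, explains why toggling exactly the $M_{11}$ block is the natural perturbation; nonsingularity of $M^{\prime}$ reduces to the single scalar Sherman--Morrison condition $1+w^{T}M^{-1}w\neq0$, which is automatic here because the $(0,0)$ entry of $M$ vanishes; and the argument transfers verbatim to any row whose diagonal entry is $0$, which is precisely the generalization the paper asserts without proof in the remark following the lemma. The paper's version is more elementary and self-contained, but since you also supply the direct verification $(M^{-1}+e_{0}e_{0}^{T})M^{\prime}=I$, the appeal to Sherman--Morrison is dispensable in your write-up as well.
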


\begin{proof}
$M^{-1}\cdot M$ is the identity matrix, so%
\[%
\begin{pmatrix}
a & \rho
\end{pmatrix}
\cdot%
\begin{pmatrix}
0\\
\mathbf{1}\\
\mathbf{0}%
\end{pmatrix}
=1\text{;}%
\]
consequently the row vector $\rho$ must have an odd number of nonzero entries
in the columns corresponding to the $\mathbf{1}$ in the first row of $M$.
Similarly, each row of $N$ must have an even number of nonzero entries in
these columns, because
\[%
\begin{pmatrix}
\kappa & N
\end{pmatrix}
\cdot%
\begin{pmatrix}
0\\
\mathbf{1}\\
\mathbf{0}%
\end{pmatrix}
=\mathbf{0}.
\]
It follows that the products%
\[%
\begin{pmatrix}
a & \rho\\
\kappa & N
\end{pmatrix}
\cdot M\text{ and }%
\begin{pmatrix}
a+1 & \rho\\
\kappa & N
\end{pmatrix}
\cdot M^{\prime}%
\]
are equal.
\end{proof}

As stated, Lemma \ref{one} requires that the first row of $M$ be in the form $%
\begin{pmatrix}
0 & \mathbf{1} & \mathbf{0}%
\end{pmatrix}
$. This is done only for convenience; the general version of the lemma applies
to any row in which the diagonal entry is $0$.

Suppose now that $S$ is a zero-diagonal symmetric $n\times n$ matrix and
$1\leq i\leq n$. If every entry of the $i^{th}$ row of $S$ is $0$, then the
local complement $S^{i}$ is the same as $S$.

If the $i^{th}$ row of $S$ includes some nonzero entry, then Lemma \ref{two}
tells us that there is a nonsingular matrix $M$ such that (a) $M$ is obtained
from $S$ by toggling some diagonal entries other than the $i^{th}$. The
general version of Lemma \ref{one} then tells us that there is a nonsingular
matrix $M^{\prime}$ such that (b) $(M^{\prime})^{-1}$ equals $M^{-1}$ except
for the $i^{th}$ diagonal entry and (c) $M^{\prime}$ equals the local
complement $S^{i}$ except for diagonal entries. Condition (a) implies that a
modified inverse $T$ of $S$ is obtained by changing all diagonal entries of
$M^{-1}$ to $0$; condition (b) implies that $T$ is also equal to $(M^{\prime
})^{-1}$ except for diagonal entries; and condition (c) implies that the local
complement $S^{i}$ is a modified inverse of $T$.

It follows that every simple local complement $S^{i}$ can be obtained from $S$
using no more than two modified inversions. Applying this repeatedly, we
conclude that $S\sim_{lc}T\Rightarrow S\sim_{mi}T$.

\section{Theorem \ref{nullspaces}}

Let $F$ be a 4-regular multigraph with an Euler system $C$, and suppose $v\in
V(F)$. Let the Euler circuit of $C$ incident at $v$ be $vC_{1}vC_{2}v$, where
$v$ does not appear within $C_{1}$ or $C_{2}$. Every edge of the connected
component of $F$ that contains $v$ lies on precisely one of $C_{1},C_{2}$. If
$w\neq v$ is a vertex of this component which is not a neighbor of $v$ in
$\mathcal{I}(C)$, i.e., $v$ and $w$ are not interlaced with respect to $C$,
then all four half-edges incident at $w$ appear on the same $C_{i}$. On the
other hand, if $v$ and $w$ are neighbors in $\mathcal{I}(C)$ then two of the
four half-edges incident at $w$ appear on $C_{1}$, and the other two appear on
$C_{2}$. Moreover the only transition at $w$ that pairs together the
half-edges from the same $C_{i}$ is the $\phi$ transition; the $\chi$ and
$\psi$ transitions at $w$ pair each half-edge from $C_{1}$ with a half-edge
from~$C_{2}$. At $v$, instead, only the $\chi$ transition pairs together
half-edges from the same $C_{i}$; the $\phi$ and $\psi$ transitions pair each
half-edge from $C_{1}$ with a half-edge from $C_{2}$. These observations are
summarized in the table below, where $N(v)$ denotes the set of neighbors of
$v$ in $\mathcal{I}(C)\,$, $(1)(2)$ indicates a transition that pairs together
half-edges from the same $C_{i}$ and $(12)$ indicates a transition that pairs
each half-edge from $C_{1}$ with a half-edge from $C_{2}$.

\begin{center}%
\begin{tabular}
[c]{|c|c|c|c|c|}\hline
& vertex & $v$ & $w\in N(v)$ & $w\notin N(v)$\\\hline
transition &  &  &  & \\\hline
$\phi$ &  & $(12)$ & $(1)(2)$ & $(1)(2)$\\\hline
$\chi$ &  & $(1)(2)$ & $(12)$ & $(1)(2)$\\\hline
$\psi$ &  & $(12)$ & $(12)$ & $(1)(2)$\\\hline
\end{tabular}

\end{center}

Suppose $\gamma$ is a circuit that is singly incident at $v$, and involves the
$\phi$ transition at $v$. We start following $\gamma$ at $v$, on a half-edge
that belongs to $C_{i}$. We return to $v$ on a half-edge that belongs to
$C_{j}$, $j\neq i$, so while following $\gamma$ we must have switched between
$C_{i}$ and $C_{j}$ an odd number of times. (See Figure \ref{intext6}.) That
is,
\[
\left\vert \{w\in N(v)\text{
$\vert$
}\gamma\text{ is singly incident at }w\text{ and involves the }\chi\text{ or
}\psi\text{ transition at }w\}\right\vert
\]
must be odd.%
\begin{figure}
[ptb]
\begin{center}
\includegraphics[
trim=1.143720in 8.624910in 1.151142in 0.718832in,
height=0.9729in,
width=4.1943in
]%
{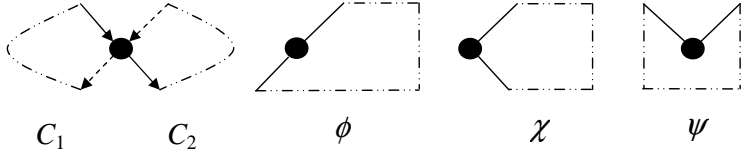}%
\caption{An Euler circuit and three circuits that are singly incident at a
vertex.}%
\label{intext6}%
\end{center}
\end{figure}

Suppose instead that $\gamma$ is a circuit that is singly incident at $v$ and
involves the $\chi$ transition at $v$. If we start following $\gamma$ along a
half-edge that belongs to $C_{i}$, we must return to $v$ on the other
half-edge that belongs to $C_{i}$, so we must have switched between $C_{i}$
and $C_{j}$ an even number of times. Consequently
\[
\left\vert \{w\in N(v)\text{
$\vert$
}\gamma\text{ is singly incident at }w\text{ and involves the }\chi\text{ or
}\psi\text{ transition at }w\}\right\vert
\]
must be even.

Similarly, if $\gamma$ is a circuit that is singly incident at $v$ and
involves the $\psi$ transition at $v$ then
\[
\left\vert \{w\in N(v)\text{
$\vert$
}\gamma\text{ is singly incident at }w\text{ and involves the }\chi\text{ or
}\psi\text{ transition at }w\}\right\vert
\]
must be odd.

Suppose $\gamma$ is doubly incident at $v$ and we follow $\gamma$ after
leaving $v$ on a half-edge that belongs to $C_{1}$. We cannot be sure on which
half-edge we will first return to $v$. But after leaving again, we must return
on the one remaining half-edge. If the first return is from $C_{1}$, then on
the way we must traverse an even number of vertices $w\in N(v)$ such that
$\gamma$ is singly incident at $w$ and involves the $\chi$ or $\psi$
transition at $w$; moreover when we follow the second part of $\gamma$ we will
leave $v$ in $C_{2}$ and also return in $C_{2}$, so again we must encounter an
even number of such vertices. If the first return is from $C_{2}$ instead,
then on the way we must traverse an odd number of vertices $w\in N(v)$ such
that $\gamma$ is singly incident at $w$ and involves the $\chi$ or $\psi$
transition at $w$; the second part of the circuit will begin in $C_{i}$ and
end in $C_{j}$ ($j\neq i$), so it will also include an odd number of such
vertices. In any case,
\[
\left\vert \{w\in N(v)\text{
$\vert$
}\gamma\text{ is singly incident at }w\text{ and involves the }\chi\text{ or
}\psi\text{ transition at }w\}\right\vert
\]
must be even.

\begin{proposition}
\label{ker}Let $F$ be a 4-regular multigraph with an Euler system $C$, and
suppose $P$ is a circuit partition of $F$. Then $\rho(\gamma,C)\in
\ker\mathcal{I}_{P}(C)$ $\forall\gamma\in P$.
\end{proposition}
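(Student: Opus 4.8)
The plan is to show that $\rho(\gamma,C)\cdot\mathcal{I}_P(C)=\mathbf{0}$ by verifying, one vertex $v$ at a time, that the $v$-entry of this product vanishes. Writing $r=\rho(\gamma,C)$, the $v$-entry of $r\cdot\mathcal{I}_P(C)$ is $\sum_{w} r_w\,(\mathcal{I}_P(C))_{wv}$. The entire preceding discussion has been set up precisely to make this computation routine: it has already established the parity facts I need. The central observation is that the support of $r$ consists of exactly those vertices $w$ at which $P$ involves the $\chi$ or $\psi$ transition \emph{and} $\gamma$ is singly incident, which is the same set of vertices appearing in all the parity counts computed above.

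First I would split the argument according to which transition $P$ involves at the vertex $v$ in question, because Definition \ref{relint} modifies the row and column of $\mathcal{I}(C)$ at $v$ differently in each case. If $P$ involves the $\phi$ transition at $v$, then the $v$-row of $\mathcal{I}_P(C)$ is zero off the diagonal and has a $1$ on the diagonal; the contribution to the $v$-entry of the product is then $r_v$ (from the diagonal) plus a sum over neighbors $w\in N(v)$, but since all off-diagonal entries in the $v$-column were zeroed out, only the diagonal term survives, giving $r_v$. Here I would use the fact that if $P$ involves $\phi$ at $v$ then $v$ is not in the support of $r$ by definition (the support only includes $\chi$ and $\psi$ vertices), so $r_v=0$ and the entry vanishes. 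If $P$ involves the $\psi$ transition at $v$, the $v$-row agrees with $\mathcal{I}(C)$ off the diagonal and has a $1$ on the diagonal, so the $v$-entry of the product is $r_v+\sum_{w\in N(v)}r_w$; I would then invoke the parity computations: if $\gamma$ is singly incident at $v$ with the $\psi$ transition then $r_v=1$ and the neighbor-sum is odd, so the total is even; if $\gamma$ is doubly incident at $v$ then $r_v=0$ and the neighbor-sum is even. The $\chi$ case is handled identically, except the diagonal is unmodified ($0$), so the $v$-entry is just $\sum_{w\in N(v)}r_w$, which the parity computations show is even (whether $\gamma$ is singly incident with $\chi$, in which case $r_v=1$ but there is no diagonal contribution and the neighbor-sum is even, or doubly incident, where again the sum is even).

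The one subtlety I would be careful about is matching up the parity statements proved above, which are phrased in terms of the transition that $P$ involves at the \emph{neighboring} vertex $w$, with the entries of $r$ as defined in Definition \ref{relcore}. The neighbor-sum $\sum_{w\in N(v)}r_w$ counts exactly those $w\in N(v)$ at which $P$ involves $\chi$ or $\psi$ and $\gamma$ is singly incident, which is precisely the cardinality appearing in each displayed parity claim. So the neighbor-sum equals that cardinality modulo $2$, and the parity results apply verbatim. I would organize the verification as a short case analysis keyed to the transition of $P$ at $v$ and whether $\gamma$ is singly or doubly incident at $v$, tabulating in each case the value of $r_v$, the diagonal contribution, and the parity of the neighbor-sum, then checking the total is even.

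The main obstacle is not any single hard step but rather the bookkeeping of keeping the diagonal modifications from Definition \ref{relint} synchronized with the definition of the relative core vector, so that in every one of the six combinations (three transitions for $P$ at $v$, crossed with singly versus doubly incident $\gamma$) the diagonal term and the neighbor-sum parity cancel correctly. The parity computations themselves have already been done in the text preceding the proposition; the work of the proof is simply to assemble them into the six cases and confirm the $v$-entry vanishes in each.
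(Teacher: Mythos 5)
Your proposal is correct and takes essentially the same approach as the paper's proof: an entry-by-entry verification that $\rho(\gamma,C)\cdot\mathcal{I}_{P}(C)=\mathbf{0}$, split into cases by the transition $P$ involves at $v$, with the diagonal modifications of Definition \ref{relint} played off against the parity counts established just before the proposition (your row-based computation agrees with the paper's dot products against columns, since $\mathcal{I}_{P}(C)$ is symmetric). The one sub-case your singly/doubly-incident dichotomy skips is a circuit $\gamma$ not incident at $v$ at all --- the paper dispatches the different-component part of this explicitly, since then $r_{v}=0$ and $r_{w}=0$ for all $w\in N(v)$, while for a same-component $\gamma$ avoiding $v$ the same even-switching traversal argument applies --- but that is a one-line supplement, not a different route.
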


\begin{proof}
Recall that $\rho(\gamma,C)\in GF(2)^{V(F)}$ has nonzero entries corresponding
to the vertices where $\gamma$ is singly incident and does not involve the
$\phi$ transition; we consider $\rho(\gamma,C)$ as a row vector. Let $v\in
V(F)$, and let $\kappa(v)$ be the column of $\mathcal{I}_{P}(C)$ corresponding
to $v$.

If $\gamma$ lies in a different connected component than $v$, or if $P$
involves the $\phi$ transition at $v$, then for every $w\in V(F)$ at least one
of $\rho(\gamma,C)$ and $\kappa(v)$ has its entry corresponding to $w$ equal
to $0$; consequently $\rho(\gamma,C)\cdot\kappa(v)=0$.

If $\gamma$ lies in the same component as $v$ and $P$ involves the $\chi$
transition at $v$, then considering the definitions of $\rho(\gamma,C)$ and
$\mathcal{I}_{P}(C)$, we see that $\rho(\gamma,C)\cdot\kappa(v)$ is the mod 2
parity of
\[
\left\vert \{w\in N(v)\text{
$\vert$
}\gamma\text{ is singly incident at }w\text{ and involves the }\chi\text{ or
}\psi\text{ transition at }w\}\right\vert .
\]
Whether $\gamma$ is singly or doubly incident at $v$, this number is even as
observed above.

If $\gamma$ lies in the same component as $v$ and $P$ involves the $\psi$
transition at $v$, then considering the definitions of $\rho(\gamma,C)$ and
$\mathcal{I}_{P}(C)$, we see that $\rho(\gamma,C)\cdot\kappa(v)$ is the mod 2
parity of this sum:%
\begin{gather*}
\left\vert \{w\in N(v)\text{
$\vert$
}\gamma\text{ is singly incident at }w\text{ and involves the }\chi\text{ or
}\psi\text{ transition at }w\}\right\vert \\
+\left\{
\begin{tabular}
[c]{ll}%
1, & if $\gamma$ is singly incident at $v$\\
0, & if $\gamma$ is doubly incident at $v$%
\end{tabular}
\ \ \ \ \ \ \right.
\end{gather*}
Whether $\gamma$ is singly or doubly incident at $v$, the sum is even as
observed above.
\end{proof}

\begin{proposition}
\label{onept}Let $F$ be a 4-regular multigraph with an Euler system $C$, and
let $P$ be a circuit partition of $F$. Suppose $Q\subset P$ and there is at
least one connected component of $F$ for which $Q$ contains some but not all
of the incident circuits of $P$. Then there is at least one vertex $v\in V(F)$
such that $P$ involves the $\chi$ or $\psi$ transition at $v$, precisely one
circuit of $Q$ is incident at $v$, and this incident circuit of $Q$ is only
singly incident.
\end{proposition}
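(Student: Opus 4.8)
The plan is to fix a single connected component and translate the three desired conditions at a vertex into a simple two-coloring statement about the circuit partition. Let $K$ be a connected component of $F$ for which $Q$ contains some, but not all, of the incident circuits of $P$; such a $K$ exists by hypothesis. Color each edge of $K$ \emph{red} if it lies on a circuit of $Q$, and \emph{blue} otherwise. Since $C$ is an Euler system, its restriction to $K$ is a single Euler circuit that traverses every edge of $K$, and since $Q$ contains some but not all of the circuits of $P$ incident in $K$, both colors actually occur.

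First I would observe that at every vertex the transition used by $P$ sorts the four incident half-edges into two pairs, and each pair is \emph{monochromatic}: the two half-edges of a pair are consecutive along a single circuit of $P$, so they lie on the same circuit and hence receive the same color. Call a vertex \emph{mixed} if its two $P$-pairs have different colors. At a mixed vertex the red pair belongs to a circuit of $Q$ that is singly incident (the other pair belongs to a different, necessarily non-$Q$, circuit), and this is the only circuit of $Q$ incident there. Thus the first two requirements of the proposition---exactly one incident circuit of $Q$, and it only singly incident---hold at a vertex precisely when that vertex is mixed. It therefore remains to produce a mixed vertex at which $P$ does \emph{not} use the $\phi$ transition.

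The key step, and where I expect the only real work, is to rule out the possibility that every mixed vertex carries the label $\phi$. Suppose, for contradiction, that $P$ uses the $\phi$ transition at every mixed vertex of $K$. I claim the Euler circuit $C|_K$ can never switch color as it is traced. At a non-mixed (pure) vertex all four incident edges share a color, so no switch is possible. At a mixed vertex the label $\phi$ means that $C$ follows exactly the transition used by $P$; since each $P$-pair is monochromatic, $C$ enters and leaves on edges of the same color, and again no switch occurs. Hence $C|_K$ would be monochromatic---yet it traverses every edge of $K$, including edges of both colors, a contradiction.

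Consequently some mixed vertex $v$ of $K$ has $P$ using the $\chi$ or $\psi$ transition, and by the discussion above this $v$ satisfies all three conditions. The main obstacle is precisely the color-switching argument of the previous paragraph; once one sees that the $\phi$ label forces $C$ to respect the red/blue partition at a mixed vertex, the global connectivity of the Euler circuit $C|_K$ delivers the contradiction immediately.
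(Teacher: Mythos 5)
Your proof is correct, and it takes a genuinely different route from the paper's. The paper argues in two stages: it first grows a connected subgraph through edges not covered by any circuit of $Q$ until it reaches a vertex at which precisely one circuit of $Q$ is incident, and singly so; then, supposing $P$ involves $\phi$ at every such vertex, it concludes that every circuit of $P$ in the component not belonging to $Q$ would involve only $\phi$ transitions, which is impossible because the only circuit built entirely from $\phi$ transitions is the Euler circuit of $C$ itself. You instead two-color the edges of the component by membership in $Q$, observe that each transition pair of $P$ is monochromatic (its two half-edges are consecutive on a single circuit of $P$), identify the target vertices as exactly your \emph{mixed} vertices, and derive the contradiction on the other side of the same duality: if every mixed vertex carried $\phi$, then the Euler circuit of $C$ in that component could never change color, contradicting that it traverses edges of both colors. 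Your version buys two things: the existence of a mixed vertex comes for free, since the vacuous case with no mixed vertices is swallowed by the same contradiction, so you do not need the paper's separate edge-growing step; and your key claim is checked locally at every vertex, whereas the paper's assertion that every non-$Q$ circuit would involve only $\phi$ transitions is left implicit at vertices where no circuit of $Q$ is incident at all. What the paper's route buys in exchange is the isolation of a reusable Kotzig-style fact --- a circuit using only $\phi$ transitions must be the Euler circuit of its component --- which is the same fact you exploit implicitly when the connectivity of the Euler circuit delivers your monochromaticity contradiction.
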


\begin{proof}
Let $F_{0}$ be a connected component of $F$ in which $Q$ includes some but not
all of the incident circuits of $P$. Then there must be an edge of $F_{0}$ not
included in any circuit of $Q$.

Choose such an edge, $e_{1}$. If a circuit of $Q$ is incident on an end-vertex
of $e_{1}$, then that circuit is only singly incident. If not, choose an edge
$e_{2}$ that connects an end-vertex of $e_{1}$ to a vertex that is not
incident on $e_{1}$. Continuing this process, we must ultimately find a vertex
at which precisely one circuit of $Q$ is incident, and this circuit is only
singly incident.

Suppose $P$ involves the $\phi$ transition at every such vertex. Then every
circuit of $P$ that is contained in $F_{0}$ and not included in $Q$ involves
only $\phi$ transitions. This is impossible, as the only circuit contained in
$F_{0}$ that involves only $\phi$ transitions is the Euler circuit of $F_{0}$
included in $C$.
\end{proof}

\begin{corollary}
\label{ind}Let $F$ be a 4-regular multigraph with an Euler system $C$, and
suppose $P$ is a circuit partition of $F$. Suppose $Q\subset P$ and there is
no connected component of $F$ for which $Q$ contains every incident circuit of
$P$. Then $\{$relative core vectors of the circuits of $Q\}$ is linearly independent.
\end{corollary}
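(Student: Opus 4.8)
The plan is to argue by contradiction, using Proposition \ref{onept} to produce a single coordinate at which any nontrivial dependence must fail. Suppose the relative core vectors of the circuits of $Q$ are linearly dependent. Since the scalars lie in $GF(2)$, every nontrivial linear relation is simply the assertion that some nonempty subfamily sums to zero; so there is a nonempty subset $Q'\subseteq Q$ with $\sum_{\gamma\in Q'}\rho(\gamma,C)=\mathbf{0}$. (Note that this formulation also covers the degenerate possibility that two distinct circuits of $Q$ share the same relative core vector.) The goal is then to exhibit a vertex $v$ at which exactly one summand contributes a nonzero entry, which is impossible if the sum is $\mathbf{0}$.

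The key preliminary step is to verify that $Q'$ satisfies the hypothesis of Proposition \ref{onept}. Because $Q'$ is nonempty, it contains some circuit $\gamma$, lying in some connected component $F_{0}$ of $F$; hence $Q'$ contains at least one circuit of $P$ incident in $F_{0}$. On the other hand, the hypothesis of the corollary says that $Q$ contains every incident circuit of $P$ in no component, and since $Q'\subseteq Q$ the same holds for $Q'$; in particular $Q'$ omits some circuit of $P$ incident in $F_{0}$. Thus $F_{0}$ is a component in which $Q'$ contains some but not all of the incident circuits of $P$, which is exactly what Proposition \ref{onept} requires.

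Applying Proposition \ref{onept} to $Q'$ then yields a vertex $v$ at which $P$ involves the $\chi$ or $\psi$ transition, precisely one circuit $\gamma_{0}\in Q'$ is incident, and $\gamma_{0}$ is only singly incident there. By Definition \ref{relcore} the $v$-entry of $\rho(\gamma_{0},C)$ is therefore $1$, since $\gamma_{0}$ is singly incident at $v$ and $P$ does not involve the $\phi$ transition there. Every other circuit $\gamma'\in Q'$ is not incident at $v$ at all, so it is in particular not singly incident at $v$, and the $v$-entry of $\rho(\gamma',C)$ is $0$. Consequently the $v$-entry of $\sum_{\gamma\in Q'}\rho(\gamma,C)$ equals $1$, contradicting the supposed relation, and so the relative core vectors of the circuits of $Q$ must be linearly independent.

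I expect no real obstacle: once one recalls that linear dependence over $GF(2)$ is a vanishing subsum and that Proposition \ref{onept} is tailored to isolate precisely one circuit at a suitable vertex, the argument is immediate. The only point deserving a moment's care is the second paragraph's observation that the \emph{nonempty} subfamily $Q'$ inherits the ``some but not all'' property from $Q$ — which follows at once from $Q'\subseteq Q$ together with the nonemptiness of $Q'$ — so that Proposition \ref{onept} is genuinely applicable to $Q'$ rather than only to $Q$ itself.
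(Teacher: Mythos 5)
Your proof is correct and takes essentially the same approach as the paper's: both reduce independence over $GF(2)$ to showing that no nonempty subfamily $Q'\subseteq Q$ can sum to $\mathbf{0}$, and both invoke Proposition \ref{onept} to isolate a vertex $v$ at which exactly one circuit of $Q'$ contributes a nonzero entry. The only differences are cosmetic — you argue by contradiction where the paper argues directly, and you spell out the (correct, easy) verification that $Q'$ inherits the ``some but not all'' hypothesis of Proposition \ref{onept}, a step the paper leaves implicit.
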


\begin{proof}
If $Q$ is empty then recall that $\emptyset$ is independent by convention.
Otherwise, let $Q^{\prime}$ be any nonempty subset of $Q$. Proposition
\ref{onept} tells us that there is a vertex $v$ of $F$ at which $P$ involves
the $\chi$ or $\psi$ transition, precisely one circuit of $Q^{\prime}$ is
incident, and this circuit is singly incident. It follows that the $v$
coordinate of
\[
\sum_{\gamma\in Q^{\prime}}\rho(\gamma,C)
\]
is $1$, and hence%
\[
\sum_{\gamma\in Q^{\prime}}\rho(\gamma,C)\not =\mathbf{0}\text{.}%
\]

\end{proof}

Proposition \ref{ker} and Corollary \ref{ind} tell us that the relative core
vectors of the circuits of a circuit partition $P$ span a $(\left\vert
P\right\vert -c(F))$-dimensional subspace of $\ker\mathcal{I}_{P}(C)$. The
circuit-nullity formula tells us that $\left\vert P\right\vert -c(F)$ is the
nullity of $\mathcal{I}_{P}(C)$, so we conclude that the relative core vectors
span the nullspace of $\mathcal{I}_{P}(C)$.

This completes our proof of Theorem \ref{nullspaces}. Before proceeding we
should recall the work of Jaeger \cite{J1}, whose Proposition 4 is equivalent
to the special case of Theorem \ref{nullspaces} in which $P$ involves no
$\phi$ transitions. A different way to prove Theorem \ref{nullspaces} is to
reduce to the special case using \emph{detachment} \cite{F1, N1} along $\phi$
transitions. We prefer the argument above because it avoids the conceptual
complications introduced by Jaeger's use of chord diagrams and surface imbeddings.

\section{Theorem \ref{intinv}}

Suppose $C$ and $C^{\prime}$ are Euler systems of $F$, and $v\in V(F)$. We use
transition labels $\phi,\chi,\psi$ with respect to $C$, and $\phi^{\prime
},\chi^{\prime},\psi^{\prime}$ with respect to $C^{\prime}$.

If $C$ and $C^{\prime}$ involve the same transition at $v$, then the row and
column of both $\mathcal{I}_{C}(C^{\prime})$ and $\mathcal{I}_{C^{\prime}}(C)$
corresponding to $v$ are the same as those of the identity matrix, so the row
and column of $\mathcal{I}_{C}(C^{\prime})\cdot\mathcal{I}_{C^{\prime}}(C)$
corresponding to $v$ are the same as those of the identity matrix.

Suppose instead that $C$ and $C^{\prime}$ involve different transitions at
$v$. Then $C^{\prime}$ involves the $\chi$ or the $\psi$ transition, and $C$
involves the $\chi^{\prime}$ or the $\psi^{\prime}$ transition; the four
possible combinations are illustrated in Figure \ref{intext9} of the
introduction. N.b. The caption of Figure \ref{intext9} mentions $C\#W$, but
the figure is valid for any Euler circuits that do not involve the same
transition at $v$.

Let $vC_{1}vC_{2}v$ be the circuit of $C$ incident at $v$; then $vC_{1}v$ and
$vC_{2}v$ are the two circuits obtained by \textquotedblleft
short-circuiting\textquotedblright\ $C$ at $v$. We claim that the relative
core vector $\rho(vC_{1}v,C^{\prime})$ is the same as the row of
$\mathcal{I}_{C^{\prime}}(C)$ corresponding to $v$. The entry of $\rho
(vC_{1}v,C^{\prime})$ corresponding to a vertex $w\neq v$ is $1$ if and only
if $vC_{1}v$ is singly incident at $w$, and $vC_{1}v$ does not involve the
$\phi^{\prime}$ transition at $w$. Also, the $vw$ entry of $\mathcal{I}%
_{C^{\prime}}(C)$ is $1$ if and only if $v$ and $w$ are interlaced with
respect to $C$, and $C^{\prime}$ does not involve the $\phi$ transition at
$w$. As $vC_{1}v$ and $C$ both involve the $\phi$ transition at every $w\neq
v$, the two entries are the same. On the other hand, the entry of $\rho
(vC_{1}v,C^{\prime})$ corresponding to $v$ is $1$ unless the transition of
$vC_{1}v$ at $v$ (that is, the $\chi$ transition) is the $\phi^{\prime}$
transition, and the $vv$ entry of $\mathcal{I}_{C^{\prime}}(C)$ is $1$ unless
the transition of $C^{\prime}$ at $v$ (the $\phi^{\prime}$ transition) is the
$\chi$ transition. These two entries are also the same, so $\rho
(vC_{1}v,C^{\prime})$ is indeed the same as the row of $\mathcal{I}%
_{C^{\prime}}(C)$ corresponding to $v$.

It follows that $\rho(vC_{1}v,C^{\prime})\cdot\mathcal{I}_{C}(C^{\prime})$ is
the row of $\mathcal{I}_{C^{\prime}}(C)\cdot\mathcal{I}_{C}(C^{\prime})$
corresponding to $v$. We claim that this coincides with the corresponding row
of the identity matrix, i.e., if $w\in V(F)$ and $\kappa_{w}$ is the column of
$\mathcal{I}_{C}(C^{\prime})$ corresponding to $w$ then $\rho(vC_{1}%
v,C^{\prime})\cdot\kappa_{w}=1$ if and only if $w=v$.

Let $P(C,v)$ be the circuit partition that includes $vC_{1}v$ and $vC_{2}v$,
along with all the circuits of $C$ not incident at $v$. Alternatively, we may
describe $P(C,v)$ as the circuit partition that involves $\phi$ transitions at
vertices than $v$, and involves the $\chi$ transition at $v$. As $C$ and
$P(C,v)$ involve the same transitions at vertices other than $v$, the relative
interlacement matrices $\mathcal{I}_{C}(C^{\prime})$ and $\mathcal{I}%
_{P(C,v)}(C^{\prime})$ coincide except for the row and column corresponding to
$v$. Looking at Figure \ref{intext9}, we see that in the cases $\chi
\psi^{\prime}$ and $\chi\chi^{\prime}$, $P(C,v)$ involves the $\phi^{\prime}$
transition at $v$; in the case $\psi\psi^{\prime}$,$\ P(C,v)$ involves the
$\chi^{\prime}$ transition at $v$; and in the case $\psi\chi^{\prime}$,
$P(C,v)$ involves the $\psi^{\prime}$ transition at $v$.

Consider the cases $\chi\psi^{\prime}$ and $\chi\chi^{\prime}$. If $w\neq v$
then the column $\kappa_{w}$ of $\mathcal{I}_{C}(C^{\prime})$ is the same as
the column of $\mathcal{I}_{P(C,v)}(C^{\prime})$ corresponding to $w$, except
for the entry corresponding to $v$; changing this entry does not affect
$\rho(vC_{1}v,C^{\prime})\cdot\kappa_{w}$, because $P(C,v)$ involves the
$\phi^{\prime}$ transition at $v$ and hence the entry of $\rho(vC_{1}%
v,C^{\prime})$ corresponding to $v$ is $0$. Consequently Theorem
\ref{nullspaces} tells us that $\rho(vC_{1}v,C^{\prime})\cdot\kappa_{w}=0$.
Theorem \ref{nullspaces} also tells us that $\rho(vC_{1}v,C^{\prime}%
)\neq\mathbf{0}$; as $\mathcal{I}_{C}(C^{\prime})$ is nonsingular, it follows
that $\rho(vC_{1}v,C^{\prime})\cdot\kappa_{v}=1.$

Now consider the cases $\psi\psi^{\prime}$ and $\chi\chi^{\prime}$. The only
vertex at which $C$ and $P(C,v)$ involve different transitions is $v$, where
one involves the $\chi^{\prime}$ transition and the other involves the
$\psi^{\prime}$ transition; consequently $\mathcal{I}_{P(C,v)}(C^{\prime})$
and $\mathcal{I}_{C}(C^{\prime})$ coincide except for their $vv$ entries,
which are opposites. The entry of $\rho(vC_{1}v,C^{\prime})$ corresponding to
$v$ is $1$, so $\rho(vC_{1}v,C^{\prime})\cdot\mathcal{I}_{P(C,v)}(C^{\prime
})=\mathbf{0}$ tells us that $\rho(vC_{1}v,C^{\prime})\cdot\kappa_{v}\neq0$,
and $\rho(vC_{1}v,C^{\prime})\cdot\kappa_{w}=0$ for $w\neq v$.

The closing comment of the last section applies here too: the special case of
Theorem \ref{intinv} involving compatible Euler systems follows from Jaeger's
proof of Proposition 5 of \cite{J1}, and the general case may be reduced to
the special case by detachment.

\bigskip

\textbf{Acknowledgments}. We would like to thank R. Brijder and D. P. Ilyutko
for many enlightening conversations. The final version of the paper also
profited from the careful attention of two anonymous readers.

\end{document}